\newtheorem{thm}{Theorem}
\newtheorem{lem}{Lemma}
\newtheorem{prop}{Proposition}
\newtheorem{defn}{Definition}
\newtheorem{rem}{Remark}
\begin{document}

 \title[A Categorical Model for the Virtual Braid Group]{A Categorical Model for the Virtual Braid Group}

\author{Louis H. Kauffman}
\address{ Department of Mathematics, Statistics and
 Computer Science, University of Illinois at Chicago,
 851 South Morgan St., Chicago IL 60607-7045, U.S.A.}
\email{kauffman@math.uic.edu}
\urladdr{http://www.math.uic.edu/$~$kauffman/}

\author{Sofia Lambropoulou}
\address{ Departament of Mathematics,
National Technical University of Athens,
Zografou Campus, GR-157 80 Athens, Greece.}
\email{sofia@math.ntua.gr}
\urladdr{http://www.math.ntua.gr/$~$sofia}

\thanks{This research has been co-financed by the European Union (European Social Fund - ESF) and Greek national funds through the Operational Program ``Education and Lifelong Learning" of the National Strategic Reference Framework (NSRF) - Research Funding Program: THALIS. Moreover, both authors were partially supported by UIC, NTUA and MFO}

\keywords{virtual braid group, pure virtual braid group, string connection, strict monoidal category, Yang-Baxter equation, algebraic Yang-Baxter equation, quantum algebra, Hopf algebra, quantum invariant.}

\subjclass[2000]{57M27}

\date{}

\begin{abstract}

This paper gives a new interpretation of the virtual braid group in terms
of a strict monoidal category $SC$ that is freely generated by one object
and three morphisms, two of the morphisms corresponding to basic pure
virtual braids and one morphism corresponding to a transposition in the
symmetric group. The key to this approach is to take pure virtual braids as
primary. The generators of the pure virtual braid group are abstract solutions
to the algebraic Yang-Baxter equation. This point of view illuminates representations
of the virtual braid groups and pure virtual braid groups via solutions to the
algebraic Yang-Baxter equation. In this categorical framework, the virtual braid group
is a natural group associated with the structure of algebraic braiding.
We then point out how the category $SC$ is related to categories associated
with quantum algebras and Hopf algebras and with quantum invariants of virtual links.
\end{abstract}
\maketitle


\section{Introduction}
This paper gives a new interpretation of the virtual braid group in terms of a tensor category $SC$ with generating morphisms $\mu_{ij}$ where this symbol denotes an abstract connecting string between
strands $i$ and $j$ in a diagram that otherwise is an identity braid on $n$ strands.  These $\mu_{ij}$
satisfy the algebraic Yang-Baxter equation and they generate, in this interpretation, the pure virtual
braid group.  The other generating morphisms  of this category are elements $v_{i}$ that are
depicted as virtual crossings between strings $i$ and $i +1.$ The generators $v_{i}$ have all the relations for transpositions generating the symmetric group.  An $n$-strand diagram that is a product of
these generators is regarded as a morphism from $[n]$ to $[n]$  where the symbol $[n]$  is regarded as
an ordered  row of $n$ points that constitute the top or the bottom of a diagram involving $n$ strands.
The virtual braid group on $n$ strands is isomorphic to the group of morphisms in the String Category $SC$ from $[n]$ to $[n].$ Given that one studies the algebraic Yang-Baxter equation, it is natural to study the compositions of algebraic braiding operators placed in two out of the $n$ tensor lines and to let the permutation group of the tensor lines act on this algebra as the group generated by the virtual crossings.
This construction is in sharp contrast to the role of the virtual crossings in the original form of the virtual knot theory.
\smallbreak

Figure~\ref{musigma} illustrates most of the issues.
At the top of the figure we have illustrated the pure virtual braid  $ \mu = \sigma v$ on two strands.
The permutation associated with $\mu$ is the identity, as each strand returns to its original position.
The braiding element $\sigma$ has been composed with the virtual crossing $v$, which acts as a permutation of the two strands. With these conventions in place we find that
$\mu$ satisfies the {\it algebraic Yang-Baxter equation}
 $$\mu_{12} \mu_{13} \mu_{23} = \mu_{23} \mu_{13} \mu_{12}$$ and this is equivalent to the statement that $\sigma$ satisfies the {\it braiding relation} $$\sigma_{1}\sigma_{2}\sigma_{1} = \sigma_{2}\sigma_{1}\sigma_{2}.$$ This relationship is well-known and it is fundamental to the construction of representations of the Artin braid group and to the construction of quantum link invariants (see
 \cite{Ohtsuki} for an  account of these matters). In the present paper we will detail this relationship once again, and we shall see that it leads to alternative ways to understand the concept of virtual braiding and to generalizations of the formulation of quantum invariants of knots and links to quantum invariants of virtual knots and links (taken up to rotational equivalence described below).
 \smallbreak

 Here a notational issue
leads to a mathematical concept. View Figure~\ref{musigma} and notice how we have diagrammed
the algebraic Yang-Baxter relation. An element $\mu_{ij}$ is shown as a graphical connection between
vertical lines labeled $i$ and $j$ respectively. The vertical lines represent different factors in a tensor product in the usual interpretation where $\mu \in \mathcal{A} \otimes  \mathcal{A}$ where $ \mathcal{A}$ is an algebra that carries a solution to the algebraic Yang-Baxter equation. We call the graphical edge representing $\mu_{ij}$ a {\it string connection} between the strands $i$ and $j.$
{\it The string connection is a topological model for a logical connection in the mathematics.}
The string going from vertical line $1$ to vertical line $3$ represents $\mu_{13},$ and it has nothing to do with strand $2$ except as in the plane the strand $2$ happens to come between strands
$1$ and  $3.$ This means that in our diagram the graphical edge for $\mu_{13}$ intersects the vertical strand $2.$ This intersection is {\it virtual} in the sense that it is just an artifact of the planar drawing.
There is no conceptual connection between $\mu_{13}$ and strand $2$.
\smallbreak

We see that virtuality in the sense of artifactual coinicidence of topological entities will be a necessity in depicting logical connection as topological connection. For this reason, the string diagrammatics that we have adopted for the
algebraic Yang-Baxter equation can be taken as a starting point for the development of the virtual braid group. In this paper, we have started with the usual virtual braid group and reformulated it in this algebraic context.
The attentive reader will see that one could start with the formalism of the algebraic Yang-Baxter equation, construct the appropriate categories and first arrive at the pure virtual braid group and then at the virtual braid group. All of these constructions come from the concept of making  topological models for logical connections in mathematical structures.
\smallbreak

The Artin braid group $B_{n}$ is motivated by a combination of topological considerations and the desire for a group structure that is very close to the structure of the symmetric group $S_{n}.$
The virtual braid group $VB_{n}$ is motivated at first by a natural extension of
the Artin braid group in the context of virtual knot theory. The virtual crossings appear as artifacts of the presentation of virtual knots in the plane where those knots acquire extra crossings that are not really part of the essential structure of the virtual knot. We add virtual crossings to
the Artin braid group and follow the principles of virtual knot theory for handling them. These virtual crossings appear crucially in the virtual braid group, and turn into the generators of the symmetric group embedded in the virtual braid group.  Thus we arrive at the action of the symmetric group in either case,
but with different motivations. Seen from the categorical view, the virtual crossings are interpreted as generators of the symmetric group whose action is added to the algebraic structure of the pure virtual braid group, and they become part of the embedded symmetry of the structure of the virtual braid group. The pure virtual  braid group is seen to be a natural monoidal category generated by formal elements satisfying the algebraic Yang-Baxter equation. The virtual braid group is then an extension of the pure virtual braid group  by the symmetric group.
It has nothing to do with the plane and nothing to do with virtual crossings. It is a natural group associated with the structure of algebraic braiding. This is our motivation for constructing the category $SC.$
\smallbreak

Here is a quick technical description of our category.
We define a strict monoidal category
 $SC$ that is
freely generated by one object $*$ and three morphisms
$\mu: * \otimes * \longrightarrow * \otimes *,$ $\mu': * \otimes * \longrightarrow * \otimes *,$
and $v:* \otimes * \longrightarrow * \otimes *.$ This basic structure, subjected to appropriate relations
can be understood via morphisms  $\mu_{ij}$ defined in terms of the generating morphisms, where the symbol
 $\mu_{ij}$ can is interpreted as a connection between strands $i$ and $j$ in a diagram that otherwise is an identity  on $n$ strands.  The $\mu_{ij}$ satisfy the algebraic Yang-Baxter equation in the sense that for $i < j< k$,
$\mu_{ij}\mu_{ik}\mu_{jk} = \mu_{jk}\mu_{ik}\mu_{ij}.$
The other basic morphisms of this category are elements $v_{i}$ that can be
depicted as virtual crossings between strings $i$ and $i +1.$ The $v_{i}$ are obtained from $v$ by tensoring with identity morphisms
$* \longrightarrow *.$ The $v_{i}$ generate the symmetric group $S_{n}.$ The $\mu_{ij}$ are obtained from $\mu$ by the action of the symmetric group that is generated by the $v_{i}.$ Composition with an  individual $v_{i}$ makes a transposition of indices on the $\mu_{kl},$ generating all of them from the basic $\mu$ and $\mu'.$ An $n$-strand diagram that is a product of
basic morphisms is a morphism from $[n]$ to $[n]$  where the symbol $[n]$  is
an ordered  row of $n$ points that constitute the top or the bottom of a diagram involving $n$ strands.
Here  $[n] = * \otimes *  \cdots * \otimes *$ for
a tensor product of $n$ $*$'s.  In Figure~\ref{musigma} we illustrate the diagrammatic interpretation of
$\mu$ and the fundamental relation of $\mu$ and $v$ with an elementary braiding element $\sigma.$
The relation is $\mu = \sigma v.$ The virtual braid $\sigma v$ is pure in the sense that its associated
permutation is the identity.
\smallbreak

\begin{figure}
     \begin{center}
     \includegraphics[width=6cm]{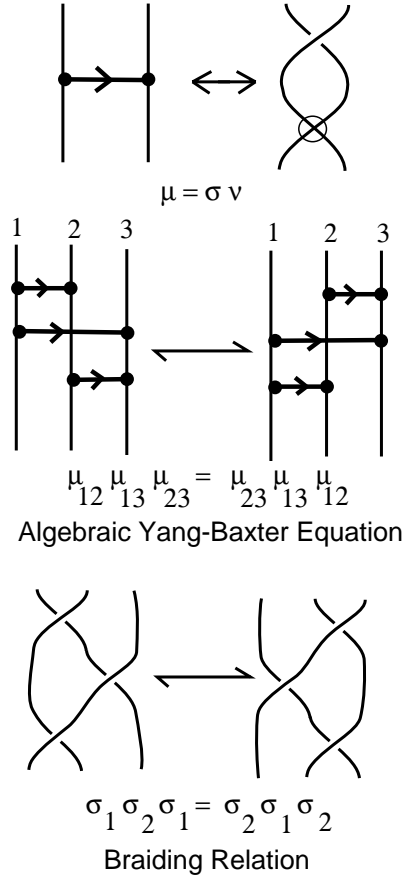}
     \caption{ Algebraic Yang-Baxter Equation and Braiding Relation }
     \label{musigma}
\end{center}
\end{figure}

The category we describe is a natural structure for an algebraist interested in exploring formal properties of the algebraic Yang-Baxter equation, and it is directly related to more topological points of view about virtual links and virtual braids. In fact, a closely related category, under differrent motivation, was constructed in \cite{KRH} where the intent was to construct a category that would be naturally associated with a Hopf algebra on the one hand, and would receive topological tangles, knots and links under a functor from the tangle category to the Hopf algebra category. The present category, giving the structure of the virtual braid group, is a subcategory of that category associated with a general Hopf algebra. We explain this relationship in detail in Section 6 of the present paper. See also Remark 10 of \cite{Bellingeri} and references therein for another earlier observation of the relationship of the algebraic Yang-Baxter equation with the pure virtual braid group.
\smallbreak

We now describe exactly the structure of the paper.
We develop our model for the virtual braid group by first recalling, in Section 2,  its usual definition motivated by virtual knot theory. We then proceed  to reformulate the virtual braid group in terms of the above mentioned generators. By the time we reach Theorem 1, we have reformulated the virtual
braid group in terms of the new generators. We then use this approach to give a presentation of the
pure virtual braid group in Theorem 2.
More precisely, in Section 2 we give a presentation for the virtual braid group in terms of our stringy model. We start by describing the usual presentation of the virtual braid group in terms of classical
braid generators and virtual generators that act as permutations between pairs of adjacent strands in the braid, and relations among them (see Figures~ \ref{genrs} -- \ref{forbiddenpic}). Elementary connecting strings (see Figure~\ref{elstrings}) are defined as elementary pure virtual braids --
products of braid generators and virtual generators as in Figure~\ref{musigma}. We then generalize the notion of connecting
string and show that it has the formal diagrammatic property of being stretched and contracted as shown in Figure~\ref{longstrings}. This property makes the string a topological model for a logical connection as we have advertised earlier in this introduction. With these constructions we then rewrite presentations for the virtual braid group and, in Section 3, show how the connection with strings generates the pure virtual  braid group with a set of relations that  correspond to the algebraic Yang-Baxter equation. See Theorem~2.
\smallbreak

In Section 4 we construct the String Category discussed in this introduction and we show that
the virtual braid group on $n$ strands is isomorphic to the group of morphisms in the String Category $SC$ from $[n]$ to $[n]$ (see Theorem 3).  In Section 5 we detail the relationship with the algebraic Yang-Baxter equation and show how to use solutions of the algebraic Yang-Baxter equation to obtain representations of the pure virtual braid group and virtual braid group.
In Section 6 we discuss a generalization of the virtual
braid group to the virtual tangle category.
We show in this section how our work on the structure of the virtual braid group fits into the structure of the virtual tangle category. The virtual tangle category can be used for obtaining invariants of knots and links via Hopf algebras. The invariants we obtain are invariants of {\em rotational virtual knots and links}
where the term rotational means that we do not allow the use of the first virtual Reidemeister move.
See Figure~\ref{vmoves}.
For the virtual tangle category, the rules for regular isotopy of rotational virtuals are
shown in Figure~\ref{regtang}. This is a most convenient category for working with virtual knots and links, and every quantum link invariant
for classical knots and links extends to an invariant for rotational virtual knots and links.  In this section we show how a generalization
of the string connectors defined previously in the paper enables the construction of quantum virtual link
invariants associated with Hopf algebras. The paper ends with two subsections on Hopf algebras. The concept of a quasi-triangular Hopf algebra creates an algebraic context for
solutions to the algebraic Yang-Baxter equation. This algebraic context gives rise to categories and relationships with knot theory and virtual knot theory that connect directly with the contents of  our investigation.

\section {A Stringy Presentation for the Virtual Braid Group}

\subsection{The virtual braid group}

 Let's begin with a presentation for the virtual braid group.
The set of isotopy classes of virtual braids on $n$ strands forms a group, the {\it virtual braid
group}  denoted $VB_n$, that was introduced in \cite{VKT}.  The group operation is the usual braid
multiplication (form $bb'$ by attaching the bottom strand ends of $b$ to the top strand ends of $b'$).
$VB_n$  is generated by the usual braid generators
$\sigma_{1},\ldots, \sigma_{n-1}$ and by the virtual generators $v_{1},\ldots, v_{n-1},$
where each virtual  crossing $v_{i}$ has the form of the braid generator $\sigma_{i}$  with the
crossing replaced by a virtual crossing. See Figure~\ref{genrs} for illustrations. Recall that in  virtual
crossings we do not distinguish between under and over crossing. Thus, $VB_n$  is an extension of
the classical braid group $B_n$ by  the symmetric group $S_n$, whereby $v_i$ corresponds to the elementary transposition $(i,i+1)$.

\begin{figure}
     \begin{center}
     \includegraphics[width=7cm]{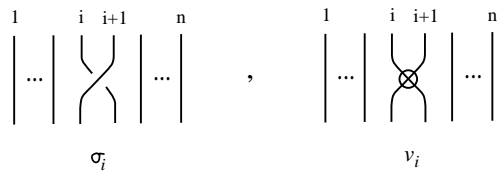}
     \caption{ The generators of $VB_n$ }
     \label{genrs}
\end{center}
\end{figure}

\smallbreak

 Among themselves the braid generators satisfy the usual {\it braiding relations}:
$$ \begin{array}{lcccl}
\text{(B1)} & \sigma_{i}\sigma_{i+1}\sigma_{i} & = & \sigma_{i+1}\sigma_{i} \sigma_{i+1}, &   \\
\text{(B2)} & \sigma_i \sigma_j & = & \sigma_j \sigma_i, & \mbox{for} \ j\neq i\pm 1.  \\
\end{array}$$
 Among themselves, the virtual generators are a presentation for the symmetric group $S_{n}$, so
they satisfy the following {\it virtual relations:}
$$ \begin{array}{lcccl}
\text{(S1)} & v_{i}v_{i+1}v_{i} & = & v_{i+1}v_{i} v_{i+1}, &    \\
\text{(S2)} & v_i v_j & = & v_j v_i, & \mbox{for} \ j\neq i\pm 1,  \\
\text{(S3)} & {v_{i}}^2 & = & 1. &  \\
\end{array}$$
 The {\it mixed relations} between virtual generators and braiding generators are as follows:
$$ \begin{array}{lcccl}
\text{(M1)} & v_{i} \sigma_{i+1} v_{i} & = & v_{i+1} \sigma_i v_{i+1}, &  \\
\text{(M2)} &  \sigma_i v_j & = & v_j \sigma_i, & \mbox{for} \ j\neq i\pm 1.    \\
\end{array}$$
To summarize, the virtual braid group $VB_{n}$ has the following presentation \cite{VKT}.

\begin{equation}\label{brpresi}
VB_{n} = \left< \begin{array}{ll}  \begin{array}{l}
\sigma_1, \ldots ,\sigma_{n-1},  \\
 v_1, \ldots, v_{n-1} \\
\end{array} &
\left|
\begin{array}{l} (B1), (B2),  \\
(S1), (S2), (S3), \\
(M1), (M2)
\end{array} \right.  \end{array} \right>
\end{equation}

 It is worth noting at this point that the virtual braid group $VB_n$ does not embed in the classical braid group $B_n$, since the virtual braid group contains torsion elements (the $v_{i}$ have order two)  and it is well--known that $B_n$ does not. But the classical braid group embeds in the virtual braid group just as classical knots embed in virtual knots.  This fact may be most easily deduced from \cite{KUP}, and can also be seen from \cite{M} and \cite{FRR}. For reference to previous work on virtual knots and braids the reader should consult
 \cite{CS1,DK,GPV,HR,HRK,VKT,SVKT,DVK,KADOKAMI,Kamada,KiSa,KUP,M,Satoh,TURAEV,V1,V2,KL3,KL4} and references therein. For work on welded braids and welded knots, see \cite{FRR,Kamada,KL3,KL4}.
For  Markov--type theorems for virtual braids (and welded braids), giving sets of moves on virtual
braids that generate the same equivalence classes as the oriented virtual link types of their closures, see \cite{Kamada} and \cite{KL4}. Such
theorems are important for understanding the structure and classification of virtual knots and
links.

\smallbreak

 The second mixed relation in the presentation of the virtual braid group will be called the {\it local detour move} and it is illustrated in Figure~\ref{locdetour}. The following  relations are also local detour moves for virtual
braids and they are easy consequences of the above.

\begin{equation}\label{}
\begin{array}{ccc}
v_{i}v_{i+1} {\sigma_i}^{\pm 1} & = & {\sigma_{i+1}}^{\pm 1}v_{i}v_{i+1},   \\
{\sigma_{i}}^{\pm 1} v_{i+1}v_{i} & = & v_{i+1}v_{i} {\sigma_{i+1}}^{\pm 1}.    \\
\end{array}
\end{equation}
 This set of relations taken together define the basic local isotopies for virtual braids. Each relation is a braided version of a local virtual link isotopy. The local detour move is written equivalently:
\begin{equation}\label{sigmadet}
\sigma_{i+1} = v_i v_{i+1} \sigma_i v_{i+1} v_i.
\end{equation}
Notice that Eq.~\ref{sigmadet} is  the braid detour move of the $i$th strand around
the crossing between the $(i+1)$-st and the
$(i+2)$-nd strand (see first two illustrations in Figure~\ref{detsigma}) and it provides an inductive way of  expressing
all  braiding generators in terms of the first braiding generator $\sigma_{1}$  and the virtual
generators $v_1,\ldots,v_{n-1}$ (see first and last illustrations in Figure~\ref{detsigma}), that is:
\begin{equation}\label{}
\sigma_{j} = (v_{j-1}\ldots v_2v_1)\,(v_{j}\ldots v_3v_2)\,\sigma_1\, (v_2v_3\ldots
v_{j})\,(v_1v_2\ldots v_{j-1}).
\end{equation}

\begin{figure}
     \begin{center}
     \includegraphics[width=3.5cm]{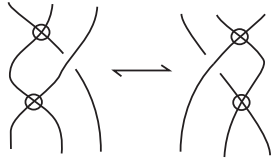}
     \caption{ The local detour}
     \label{locdetour}
\end{center}
\end{figure}

\smallbreak

\begin{figure}
     \begin{center}
     \includegraphics[width=11cm]{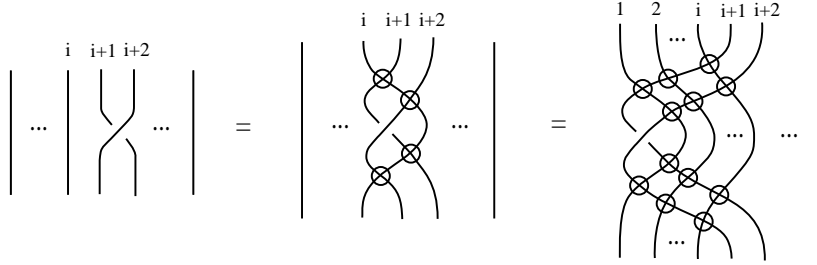}
     \caption{ Detouring the crossing $\sigma_{i+1}$}
     \label{detsigma}
\end{center}
\end{figure}

\noindent In \cite{KL3} we derive the following reduced presentation for
$VB_{n}$:
\begin{equation}\label{brreduced}
VB_{n} = \left< \begin{array}{ll}
\begin{array}{c}
\sigma_1, \\
v_1, \ldots ,v_{n-1}  \\
\end{array} &
\left|
\begin{array}{l}(S1), (S2), (S3) \\
\sigma_1 v_j=v_j \sigma_1, \ \ \  \mbox{for} \  j>2   \\
 v_1 \sigma_1 v_1\,v_2 \sigma_1 v_2\, v_1 \sigma_1 v_1 =
  v_2 \sigma_1 v_2\, v_1 \sigma_1 v_1 \, v_2 \sigma_1 v_2   \\
\sigma_1\, v_2 v_3 v_1 v_2 \sigma_1 v_2 v_1 v_3 v_2 =
    v_2 v_3 v_1 v_2 \sigma_1 v_2 v_1 v_3 v_2 \, \sigma_1   \\
\end{array} \right.  \end{array} \right>
\end{equation}

The local detour move gives rise to a generalized {\it detour move}, by which any box in the braid can be detoured to any position in the braid, see Figure~\ref{boxdet}.

\smallbreak

\begin{figure}
     \begin{center}
     \includegraphics[width=8cm]{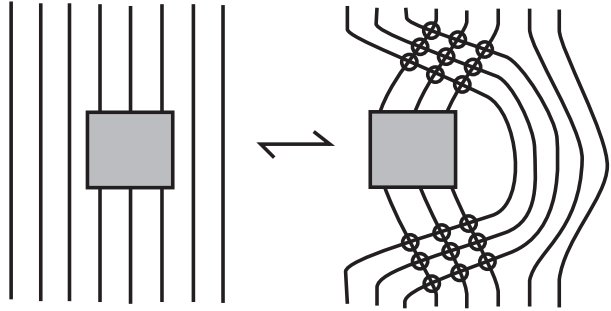}
     \caption{ Detouring a box}
     \label{boxdet}
\end{center}
\end{figure}

Finally, it is worth recalling that in virtual knot theory there are ``forbidden moves" involving two real crossings and one virtual. More precisely, there are two types of
forbidden moves: One with an over arc, denoted $F_1$ and another with an under arc, denoted $F_2$. See \cite{VKT} for explanations and interpretations. Variants of the forbidden moves are illustrated in Figure~\ref{forbiddenpic}. So, relations of the types:

\begin{equation}\label{forbidden}
 \sigma_{i} v_{i+1} \sigma_{i}^{-1} = \sigma_{i+1}^{-1} v_{i} \sigma_{i+1} \quad (F1) \qquad \mbox{and} \qquad \sigma_{i}^{-1} v_{i+1} \sigma_{i} = \sigma_{i+1} v_{i} \sigma_{i+1}^{-1} \quad (F2)
\end{equation}
 are not valid in virtual knot theory.

\begin{figure}
     \begin{center}
     \includegraphics[width=10cm]{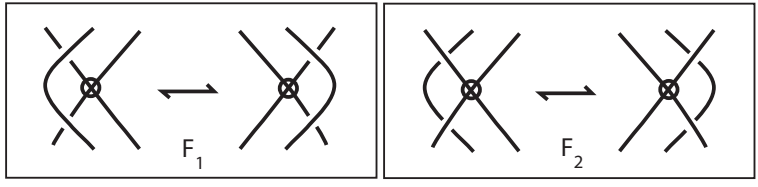}
     \caption{The forbidden moves}
     \label{forbiddenpic}
\end{center}
\end{figure}

\subsection{}

We now wish to describe a new set of generators and relations for the virtual braid group  that makes it particularly easy to describe the pure virtual braid group, $VP_{n}$. In order to accomplish
this aim, we introduce the following elements of $VP_{n}$, for $i=1,\ldots,n-1$.
\begin{equation}\label{mui}
\mu_{i,i+1} := \sigma_{i} v_{i}
\end{equation}
 We indicate $\mu_{i,i+1}$ by a  connecting string between the $i$-th and $(i+1)$-st strands  with a dark vertex on the $i$-th strand, a dark vertex on the $(i+1)$-st strand, and an arrow from left to right. View Figure~\ref{elstrings}. The inverses $\mu_{i,i+1}^{-1} = v_{i}\sigma_{i}^{-1} $
have same directional arrows but are indicated by using white vertices. By detouring it  to the leftmost position of the braid, we can write $\mu_{i,i+1}$ in terms of $\mu_{12}$ conjugated by a virtual word:
\begin{equation}\label{muimu12}
\mu_{i,i+1} = (v_{i-1}\ldots v_2v_1)(v_{i}\ldots v_3v_2) \mu_{12} (v_2 v_3 \ldots v_{i}) (v_1 v_2\ldots  v_{i-1}).
\end{equation}

We also introduce the elements
\begin{equation}\label{muibar}
\mu_{i+1,i} := v_{i} \sigma_{i}  = v_{i} \mu_{i,i+1}v_{i}
\end{equation}
We indicate $\mu_{i+1,i}$ by a  connecting string  between the  $i$-th and $(i+1)$-st strands, with a dark vertex
on the $i$-th strand, a dark vertex on the $(i+1)$-st strand, and an arrow from right to left  (reversing the direction from $\mu_{i,i+1}$), view Figure~\ref{elstrings}. An illustration of Eq.~\ref{muibar} (see top of Figure~\ref{relstrings}) explains the reversing of the direction of the arrow in the graphical interpretation of $\mu_{i+1,i}$. The inverses  $\mu_{i+1,i}^{-1} = \sigma_{i}^{-1} v_{i}$ have same directional arrows but are indicated by using white vertices. An analogous equation to Eq.~\ref{muimu12} holds:
\begin{equation}\label{muimu12bar}
\mu_{i+1,i} = (v_{i-1}\ldots v_2v_1)(v_{i}\ldots v_3v_2) \mu_{21} (v_2 v_3 \ldots v_{i}) (v_1 v_2\ldots  v_{i-1})
\end{equation}

\begin{defn} \rm
The pure virtual braids $\mu_{i,i+1}, \mu_{i+1,i}$ and their inverses shall be called {\it elementary connecting strings}.
\end{defn}

\begin{figure}
     \begin{center}
     \includegraphics[width=6cm]{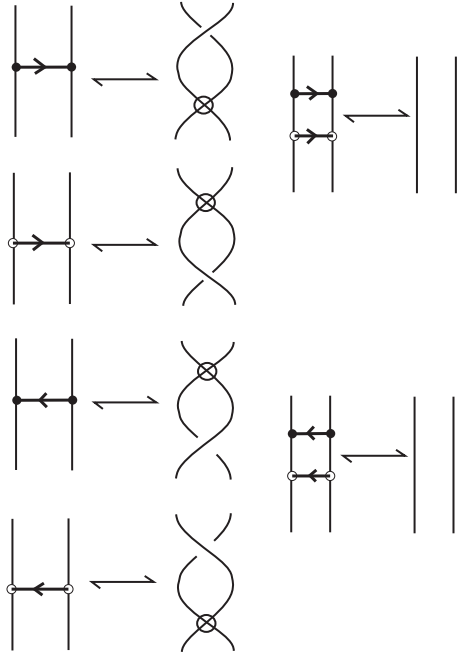}
     \caption{ The elementary connecting strings $\mu_{i,i+1}$, $\mu_{i+1,i}$ and their inverses}
     \label{elstrings}
\end{center}
\end{figure}

\smallbreak

 From Eqs.~\ref{mui} and \ref{muibar} follow directly the relations:
 \begin{equation}\label{muimuibar}
v_i \mu_{i+1,i} = \mu_{i,i+1} v_i \ \ \mbox{and} \ \ \mu_{i+1,i}^{-1} v_i = v_i \mu_{i,i+1}^{-1},
\end{equation}
also illustrated in Figure~\ref{relstrings}.

\begin{figure}
     \begin{center}
     \includegraphics[width=5.2cm]{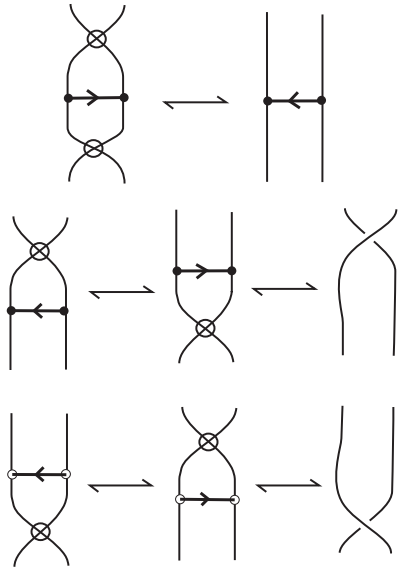}
     \caption{ Relations between the elementary connecting strings}
     \label{relstrings}
\end{center}
\end{figure}

\smallbreak

Further, we generalize the notion of a connecting string and define, for $i<j$, the element $\mu_{ij}$ (a connecting string from strand $i$ to strand $j$) by the formula
\begin{equation}\label{muij}
\mu_{ij} := v_{j-1} v_{j-2} \ldots v_{i+1} \, \mu_{i,i+1} \, v_{i+1} \ldots v_{j-2} v_{j-1}.
\end{equation}
In a diagram $\mu_{ij}$ is denoted by a connecting string from strand $i$ to strand $j$, with dark vertices on these two strands and an arrow pointing from left to right, view Figure~\ref{longstrings}.

\begin{figure}
     \begin{center}
     \includegraphics[width=7cm]{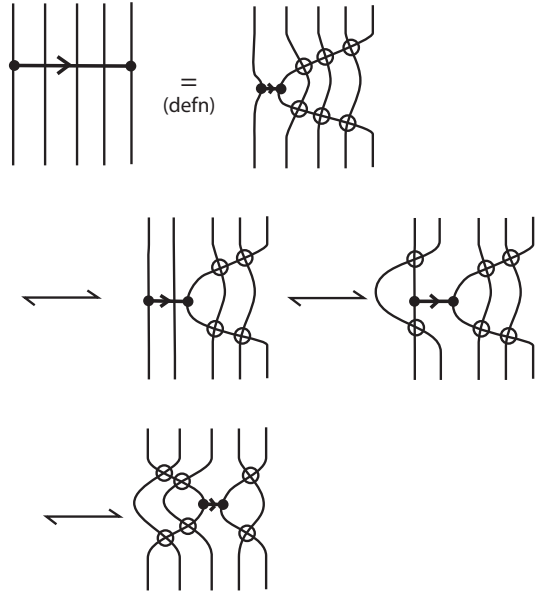}
     \caption{ Connecting strings}
     \label{longstrings}
\end{center}
\end{figure}

\smallbreak

We also generalize, for $i<j$, the elements $\mu_{i+1,i}$ to the elements:
\begin{equation}\label{muijbar}
\mu_{ji} := t_{ij}\, \mu_{ij}\, t_{ij}
\end{equation}
where $t_{ij} = v_{i} v_{i+1} \ldots v_{j} \ldots v_{i+1} v_{i}$ is the element of $S_{n}$ (generated by the $v_{i}$'s) that interchanges strands $i$ and $j$, leaving all other strands fixed.
We denote $\mu_{ji}$ by a connecting string from strand $i$ to strand $j$, with dark vertices, and an arrow pointing from right to left.  Figure~\ref{exchange} illustrates the example $\mu_{31} = v_{2}v_{1}v_{2}\mu_{13}v_{2}v_{1}v_{2}$. It is easily verified that
\begin{equation}\label{muijbar2}
\mu_{ji} = v_{j-1} v_{j-2} \ldots v_{i+1} \, \mu_{i+1,i} \, v_{i+1} \ldots v_{j-2} v_{j-1}
\end{equation}

\smallbreak

\begin{figure}
     \begin{center}
     \includegraphics[width=4.5cm]{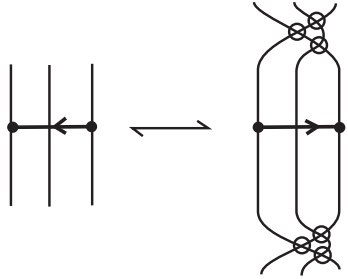}
     \caption{ The exchange of labels between $\mu_{ij}$ and $\mu_{ji}$}
     \label{exchange}
\end{center}
\end{figure}

\noindent The inverses of the elements $\mu_{ij}$ and $\mu_{ji}$ have same directional arrows respectively, but white dotted vertices.

\begin{defn}\rm
The elements $\mu_{ij}, \ \mu_{ji}$ and their inverses shall be called {\it connecting strings}.
\end{defn}

With the above conventions we can speak of connecting strings $\mu_{rs}$ for any $r,s$. It is important to have the elements $\mu_{ji}$ when $j > i$, but in the algebra they are all defined in terms of the $\mu_{ij}$. The importance of  having the elements $\mu_{ji}$ will become clear when we restrict to the pure virtual braid group.

\begin{rem} \rm In the definition of $\mu_{ij}$ if we consider $\mu_{i,i+1}$ as a virtual box inside the virtual braid we can use the (generalized) detour moves to bring it to any position, as Figure~\ref{longstrings} illustrates. This means that the contraction of $\mu_{ij}$ to $\mu_{i,i+1}$ may be pulled anywhere between the $i$-th and the $j$-th strands. By the same reasoning the contraction of $\mu_{ji}$ to $\mu_{i+1,i}$ may be also pulled anywhere between the $i$-th and the $j$-th strands.
\end{rem}

\subsection{}

We shall next give some relations satisfied by the connecting strings. Before that we need the following remark.

\begin{rem}\label{action} \rm
The symmetric group $S_n$ clearly acts on $VB_n$ by conjugation. By their definition (Eqs. \ref{mui}, \ref{muibar}, \ref{muij}, \ref{muijbar}, \ref{muijbar2}), this action on connecting strings translates into permuting their indices, that is, a permutation $\tau \in S_n$ acting on  $\mu_{rs}$ will change it to $\mu_{\tau (r),\tau (s)}$. This means that $S_n$ acts by conjugation also on the subgroup of $VB_n$ generated by the $\mu_{ij}$'s. Moreover, by Eqs.~\ref{muimu12}, \ref{muibar}, all connecting strings may be obtained by the action of $S_n$ on $\mu_{12}$. For $\sigma \in S_{n}$ we regard $\sigma$ both as a product of the elements $v_{i}$ and as a permutation of the set $\{1,2,3, \ldots, n \}.$

Further, any relation in $VB_n$ transforms into a valid relation after acting on it an element of $S_n$. In particular, a commuting relation between connecting strings will be transformed to a new commuting relation between connecting strings.
\end{rem}

\begin{lem}\label{stringyslides}
The following relations hold  in $VB_n$ for all $i$.
\begin{enumerate}
\item $v_{i} \mu_{i,i+1} = \mu_{i+1,i} v_{i}$ \ , \ \ $v_{i} \mu_{i+1,i}  = \mu_{i,i+1} v_{i}$
\item $v_{i+1} \mu_{i,i+1} = \mu_{i,i+2} v_{i+1}$ \ , \ \ $
v_{i+1} \mu_{i+1,i} =  \mu_{i+2,i} v_{i+1}$
\item $v_{i-1} \mu_{i,i+1}  =  \mu_{i-1,i+1} v_{i-1}$ \ , \ \ $
v_{i-1} \mu_{i+1,i} =  \mu_{i+1,i-1} v_{i-1}$
\item $v_{j} \mu_{i,i+1}  =  \mu_{i,i+1} v_{j}$ \ , \ \ $
v_{j} \mu_{i+1,i}  =   \mu_{i+1,i} v_{j}, \ \ j\neq i-1,\, i,\, i+1$.\\
\end{enumerate}
The above local relations generalize to similar ones involving different indices.  Relations 1 are generalized by Eq.~\ref{muijbar}, reflecting the mutual reversing of $\mu_{ij}$ and $\mu_{ji}$, recall Figures~\ref{relstrings} and \ref{exchange}. Relations 2 and 3 are the {\rm local slide moves}, as illustrated in Figure~\ref{slides}, and they generalize to the {\rm slide moves} coming from the defining equations: $\mu_{i+1,k}=v_i \mu_{ik}v_i$ for any $k<i$ or $k>i+1$. Relations 4 and their generalizations: $v_{j} \mu_{ik}  =  \mu_{ik} v_{j}$ for any $k\neq i$ and $j\neq i-1,\, i,\, k-1,\, k$, are all commuting relations. All these relations result from the action of any $\tau \in S_n$ on $\mu_{12}$:
\begin{equation}\label{mixed}
\text{{\boldmath $\tau^{-1} \, \mu_{12} \, \tau  =\mu_{\tau (1),\tau (2)}$} }.
\end{equation}
\end{lem}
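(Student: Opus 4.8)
The plan is to prove the four displayed local relations by direct computation from the defining equations \eqref{mui}, \eqref{muibar}, \eqref{muij} and \eqref{muijbar2} together with the braid relations (B1), (B2), the symmetric relations (S1)--(S3) and the mixed relations (M1), (M2), and then to obtain both the stated generalizations and the master equation \eqref{mixed} by reducing conjugation by an arbitrary $\tau\in S_n$ to conjugation by a single generator $v_k$ and inducting on the word length of $\tau$. Throughout, the companion equations for the right-to-left strings will be entirely analogous, so I concentrate on the left-to-right ones.

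For the local relations themselves, three of the four are essentially bookkeeping. Relation~1 is immediate: \eqref{muibar} reads $\mu_{i+1,i}=v_i\mu_{i,i+1}v_i$, so multiplying by $v_i$ and using $v_i^2=1$ (relation (S3)) gives $v_i\mu_{i,i+1}=\mu_{i+1,i}v_i$. Relation~2 is just the defining equations \eqref{muij} and \eqref{muijbar2} specialized to $j=i+2$, namely $\mu_{i,i+2}=v_{i+1}\mu_{i,i+1}v_{i+1}$, read across using $v_{i+1}^2=1$. Relation~4 is a commuting relation: since $\mu_{i,i+1}=\sigma_i v_i$, for $j\neq i-1,i,i+1$ the generator $v_j$ commutes with both $\sigma_i$ (by (M2)) and $v_i$ (by (S2)). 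The one genuinely computational case is Relation~3. Expanding $\mu_{i-1,i+1}=v_i\sigma_{i-1}v_{i-1}v_i$ from \eqref{muij}, the claim $v_{i-1}\mu_{i,i+1}=\mu_{i-1,i+1}v_{i-1}$ becomes $v_{i-1}\sigma_i v_i=v_i\sigma_{i-1}v_{i-1}v_i v_{i-1}$, which I would verify by first rewriting the mixed relation (M1) (shifted to index $i-1$, so $v_{i-1}\sigma_i v_{i-1}=v_i\sigma_{i-1}v_i$) as $v_{i-1}\sigma_i=v_i\sigma_{i-1}v_i v_{i-1}$ and then applying (S1) in the form $v_i v_{i-1}v_i=v_{i-1}v_i v_{i-1}$.

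With the elementary relations in hand, the heart of the argument is the single-generator conjugation rule
\[
v_k\,\mu_{rs}\,v_k=\mu_{\pi_k(r),\pi_k(s)},\qquad \pi_k=(k,k+1),
\]
valid for all indices $r\neq s$ and all $k$. I would establish this by cases according to how $\{k,k+1\}$ meets $\{r,s\}$: when they are disjoint it is the generalized commuting relation $v_k\mu_{rs}=\mu_{rs}v_k$; when they overlap in a single index it is a slide move of the form $\mu_{i+1,k}=v_i\mu_{ik}v_i$; and when $\{k,k+1\}=\{r,s\}$ it is Relation~1. Granting this rule, the master equation \eqref{mixed} follows by induction on the length of $\tau$ written as a word in the $v_k$: peeling off one generator $v_k$ and applying the rule transposes the current index pair by $\pi_k$, and the inductive hypothesis disposes of the remaining word, so that the full product permutation (which, with the convention of \eqref{mixed}, is $\tau$ itself) is applied to the pair $(1,2)$. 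Specializing $\tau$ to the canonical words of \eqref{muimu12} and \eqref{muij} then recovers the explicit generalizations listed in the statement.

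The main obstacle is proving the generalized slide move $\mu_{i+1,k}=v_i\mu_{ik}v_i$ in full generality, i.e.\ that the contraction point of a long connecting string may be dragged past an intervening strand. The definition \eqref{muij} pins the contraction to one particular adjacent pair, so this identity is a genuine consistency statement rather than a tautology. I expect to prove it by induction on $|i-k|$, stripping the outermost virtual generator in \eqref{muij} and reducing to the elementary Relations~2 and~3 together with (S1)--(S2); alternatively, one can invoke the generalized detour move of Figure~\ref{boxdet}, treating $\mu_{i,i+1}$ as a box, which guarantees topologically that conjugation by a virtual word slides the box's endpoints according to the associated permutation. Once the slide move is secured, the case analysis for the single-generator rule and the induction establishing \eqref{mixed} become routine.
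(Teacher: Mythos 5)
Your proposal is correct and follows essentially the same route as the paper: relations 1, 2 and 4 are read off from Eqs.~\ref{muibar}, \ref{muij}, \ref{muijbar2} together with (S2) and (M2), relation 3 is verified using (M1) and (S1), and the generalizations together with Eq.~\ref{mixed} are obtained by conjugation by the generators $v_k$ (the paper's detour moves and the $S_n$-action of Remark~\ref{action}). The only difference is presentational: where the paper proves the local slide move diagrammatically (Figure~\ref{slidepf}) and compresses the passage to general indices and arbitrary $\tau$ into appeals to detour moves, you spell out the same content via an explicit single-generator conjugation rule and inductions on $|i-k|$ and on the word length of $\tau$.
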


\begin{figure}
     \begin{center}
     \includegraphics[width=8cm]{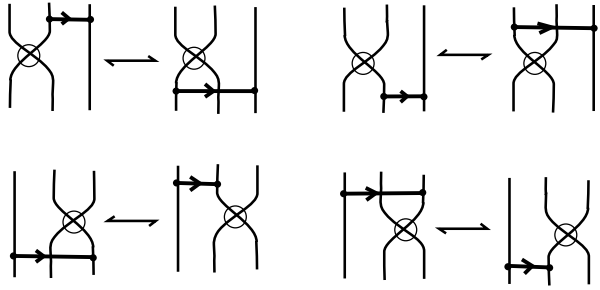}
     \caption{ Slide moves}
     \label{slides}
\end{center}
\end{figure}

\begin{proof}
All relations 1,2 and 3 follow directly from the definitions of the elements $\mu_{ij}$ and
$\mu_{ji}$. For example,  $v_{i+1} \mu_{i,i+1} = \mu_{i,i+2} v_{i+1}$ is equivalent to the defining relation $\mu_{i,i+2} = v_{i+1} \mu_{i,i+1} v_{i+1}$. Figure~\ref{slidepf} illustrates the proof of a local slide move. Relations 4 follow immediately from the commuting relations (S2) and (M2) of $VB_n$. The generalizations of all types of moves follow from the local ones after using detour moves. Finally, the derivation of all relations from the action of $S_n$ on $\mu_{12}$ is explained in Remark~\ref{action} and, more precisely, by the Eqs.~\ref{muimu12}, \ref{muij}, \ref{muimu12bar}, \ref{muijbar2}.
\end{proof}

\begin{figure}
     \begin{center}
     \includegraphics[width=4.5cm]{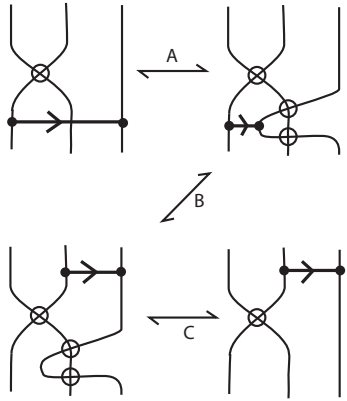}
     \caption{ Proving a local slide move}
     \label{slidepf}
\end{center}
\end{figure}

\begin{lem}\label{stringycom}
The following commuting relations among connecting strings hold in $VB_n$.
\begin{enumerate}
\item $ \text{{\boldmath $\mu_{12} \mu_{34} = \mu_{34} \mu_{12}$}}$
\item $\mu_{14} \mu_{23} = \mu_{23} \mu_{14} \ \ \ \ \ (\text{action by } (324))$
\item $\mu_{13} \mu_{24} = \mu_{24} \mu_{13}  \ \ \ \ \ (\text{action by } (23))$
\end{enumerate}
The above local relations generalize to commuting relations of the form:
\begin{equation}\label{com}
\mu_{ij}\mu_{kl} = \mu_{kl}\mu_{ij}, \ \ \{i,j\} \cap \{k,l\} = \emptyset.
\end{equation}
All the above commuting relations result from relation~1 by  actions of permutations  (indicated for relations~2, 3 to the right of each relation). Moreover, for any choice of four strands there are exactly $24$ such commuting relations that preserve the four strands.
\end{lem}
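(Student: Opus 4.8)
The plan is to prove the single base relation~1 by a direct computation with the defining generators, and then to obtain relations~2 and~3, the general form~(\ref{com}), and the count of $24$ purely formally from the symmetric group action of Remark~\ref{action}.

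First I would verify $\mu_{12}\mu_{34}=\mu_{34}\mu_{12}$ by expanding the definitions. By Eq.~\ref{mui} we have $\mu_{12}=\sigma_1 v_1$ and $\mu_{34}=\sigma_3 v_3$, and the essential feature is that the two factors have disjoint, non-adjacent supports $\{1,2\}$ and $\{3,4\}$. The relations (B2), (S2) and (M2) then give the four pairwise commutations $\sigma_1\sigma_3=\sigma_3\sigma_1$, $v_1 v_3=v_3 v_1$, $\sigma_1 v_3=v_3\sigma_1$ and $v_1\sigma_3=\sigma_3 v_1$, so that any word in $\sigma_1,v_1$ commutes with any word in $\sigma_3,v_3$. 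In particular $\sigma_1 v_1$ commutes with $\sigma_3 v_3$, which is relation~1.

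Next I would deduce the remaining relations by conjugation. By Remark~\ref{action} and Eq.~\ref{mixed}, conjugation by $\tau\in S_n$ sends $\mu_{rs}$ to $\mu_{\tau(r),\tau(s)}$ and carries any valid relation of $VB_n$ to a valid relation; applied to relation~1 it yields $\mu_{\tau(1)\tau(2)}\,\mu_{\tau(3)\tau(4)}=\mu_{\tau(3)\tau(4)}\,\mu_{\tau(1)\tau(2)}$. Choosing $\tau=(324)$ produces relation~2 and $\tau=(23)$ produces relation~3, exactly as annotated. For the general relation~(\ref{com}) with $\{i,j\}\cap\{k,l\}=\emptyset$, I would pick any $\tau\in S_n$ with $\tau(1)=i,\ \tau(2)=j,\ \tau(3)=k,\ \tau(4)=l$ --- possible since the four targets are distinct --- and conjugate relation~1 by it. Because the action merely permutes indices, this step automatically delivers the ``long'' strings (such as $\mu_{14}$) from the adjacent strings of the base relation, so no separate treatment of non-adjacent $\mu_{ij}$ is required.

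Finally, for the count I would fix four strands, conjugate so that they are $\{1,2,3,4\}$, and exhibit a bijection between $S_4$ and the relations. An instance of the commuting relation on these strands is the template $\mu_{xy}\mu_{zw}=\mu_{zw}\mu_{xy}$ determined by an ordering $(x,y,z,w)$ of the four strands, and this ordered data is recovered from $\tau$ by reading off $\tau(1),\ldots,\tau(4)$; hence $\tau\mapsto(\text{conjugate of relation~1 by }\tau)$ is a bijection from $S_4$ onto these instances, giving $4!=24$. The only delicate point in the whole argument is this bookkeeping: one must record the two oriented strings as an \emph{ordered} pair so that $\tau\mapsto(\tau(1),\tau(2),\tau(3),\tau(4))$ is injective, since reading each equation as unordered would identify $\tau$ with $\tau\cdot(13)(24)$ and halve the count. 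With the ordered convention the correspondence with $S_4$ is exact; every other step is just the formal transport of relation~1 by the symmetric group action.
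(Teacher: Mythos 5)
Your proposal is correct and follows essentially the same route as the paper: relation~1 is established from the commuting relations (B2), (S2), (M2) of $VB_n$, and relations~2, 3, the general form $\mu_{ij}\mu_{kl}=\mu_{kl}\mu_{ij}$, and the count of $24$ are all obtained by transporting relation~1 under the $S_n$-conjugation action of Remark~\ref{action}, exactly as the paper does (the paper's additional detour-move computation reducing relation~2 to relation~1 is just an independent diagrammatic illustration of the same fact). Your explicit bookkeeping that the $24$ relations correspond bijectively to \emph{ordered} tuples $(\tau(1),\tau(2),\tau(3),\tau(4))$, i.e.\ to elements of $S_4$, makes precise a count the paper states without elaboration.
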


\begin{proof}
Relation 1 clearly rest on the virtual braid commuting relations (B2) and (M2).  We shall show how relation 2 reduces  to relation 1. In the proof we underline in each step the generators of $VB_n$ on which virtual braid relations are applied.

\vspace{.1in}

\noindent $\begin{array}{rcl}
\mu_{i,i+3} \mu_{i+1,i+2} &  = & v_{i+2} v_{i+1} \mu_{i,i+1} \underline{v_{i+1} v_{i+2} \mu_{i+1,i+2}}  \\

 \ & \stackrel{detour}{=} &  v_{i+2} v_{i+1} \underline{\mu_{i,i+1} \mu_{i+2,i+3}} v_{i+1} v_{i+2}  \\

 \ &  \stackrel{(1)}{=} & \underline{v_{i+2} v_{i+1} \mu_{i+2,i+3}} \mu_{i,i+1} v_{i+1} v_{i+2} \\

 \ & \stackrel{detour}{=} & \mu_{i+1,i+2} v_{i+2} v_{i+1} \mu_{i,i+1} v_{i+1} v_{i+2} \\

 \ & = & \mu_{i+1,i+2} \mu_{i,i+3}.      \\
\end{array}$

\vspace{.1in}

\noindent Figure~\ref{commute} illustrates how relation 3 also reduces  to relation 1. Notice now that relations 2 and 3 can be derived from relation 1 by conjugation by the permutations $(324)$ and $(23)$ respectively. Let us see how this works specifically for relation~2: the indices of relation~1 against the indices of relation~2 induce the permutation $(324) = v_2v_3$. This means that  conjugating relation~1 by the word $v_2v_3$ will yield relation~2.

Notice also that there are $24$ commuting relations in total involving the strands $1,2,3,4$ and indices in any order. Likewise for any choice of four strands. The derivation of all relations from the action of $S_n$ on relation~1 is clear from Remark~\ref{action}.
\end{proof}

\begin{figure}
     \begin{center}
     \includegraphics[width=6cm]{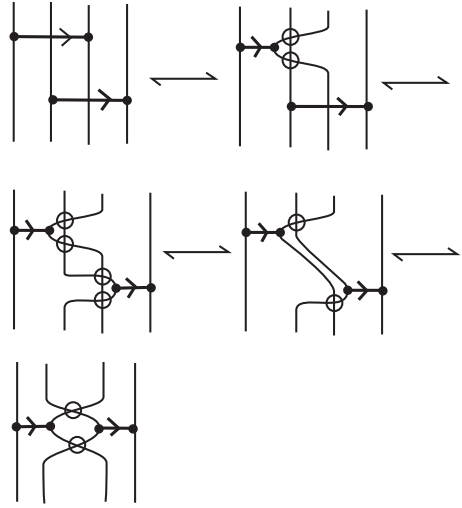}
     \caption{ A local commuting relation}
     \label{commute}
\end{center}
\end{figure}

\begin{lem}\label{stringybr}
The following {\rm stringy braid relations} hold  in $VB_n$.
\begin{enumerate}
\item $\text{{\boldmath $\mu_{12} \mu_{13} \mu_{23} = \mu_{23} \mu_{13}  \mu_{12}$}}$
\item $\mu_{21} \mu_{23} \mu_{13} = \mu_{13} \mu_{23} \mu_{21} \ \ \ \ \ (\text{action by } (12))$
\item $\mu_{13} \mu_{12} \mu_{32} = \mu_{32} \mu_{12} \mu_{13} \ \ \ \ \ (\text{action by } (23))$
\item $\mu_{32} \mu_{31} \mu_{21} =\mu_{21}  \mu_{31} \mu_{32} \ \ \ \ \ (\text{action by } (13))$
\item $\mu_{23} \mu_{21} \mu_{31} = \mu_{31} \mu_{21} \mu_{23} \ \ \ \ \ (\text{action by } (123))$
\item $\mu_{31} \mu_{32} \mu_{12} = \mu_{12} \mu_{32} \mu_{31} \ \ \ \ \ (\text{action by } (132))$
\end{enumerate}
The above relations generalize to three-term relations of the form:
\begin{equation}\label{strbr}
\mu_{ij}\mu_{ik}\mu_{jk} = \mu_{jk}\mu_{ik}\mu_{ij}, \ \ \ \text{for any distinct } i,j,k.
\end{equation}
 All six relations stated above result from the action on relation 1 by permutations of $S_n$, which only permute the indices $\{1,2,3 \}$. These permutations are indicated to the right of each relation. Moreover, for any choice of three strands there are exactly six relations analogous to the above, which all result from relation~1 by actions of appropriate permutations that preserve the three  strands each time.
\end{lem}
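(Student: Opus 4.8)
The plan is to prove relation~1 — the algebraic Yang--Baxter equation — directly, by reducing it to the classical braiding relation (B1), and then to obtain relations 2--6 together with the general form Eq.~\ref{strbr} purely formally from the conjugation action of $S_n$ recorded in Remark~\ref{action}.

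First I would write the three connecting strings appearing in relation~1 in terms of the generators $\sigma_i,v_i$, using Eqs.~\ref{mui} and \ref{muij}: namely $\mu_{12}=\sigma_1v_1$, $\mu_{23}=\sigma_2v_2$, and $\mu_{13}=v_2\mu_{12}v_2=v_2\sigma_1v_1v_2$. Substituting these and cancelling with (S3) gives
\[
\mu_{12}\mu_{13}\mu_{23}=\sigma_1v_1v_2\sigma_1v_1v_2\sigma_2v_2,\qquad
\mu_{23}\mu_{13}\mu_{12}=\sigma_2\sigma_1v_1v_2\sigma_1v_1,
\]
so that relation~1 is equivalent to the single word identity $\sigma_1v_1v_2\sigma_1v_1v_2\sigma_2v_2=\sigma_2\sigma_1v_1v_2\sigma_1v_1$ in $VB_n$.

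The heart of the argument is to normalise both sides. I would use the local detour move Eq.~\ref{sigmadet}, which for $i=1$ reads $v_1v_2\sigma_1=\sigma_2v_1v_2$ together with its mirror $\sigma_1v_2v_1=v_2v_1\sigma_2$, and (S1) in the forms $v_1v_2v_1=v_2v_1v_2$ and $v_1v_2v_1v_2=v_2v_1$. Pushing the braid generators leftward on each side, the virtual tails collapse to the common word $v_1v_2v_1$, yielding
\[
\mu_{12}\mu_{13}\mu_{23}=\sigma_1\sigma_2\sigma_1\,v_1v_2v_1,\qquad
\mu_{23}\mu_{13}\mu_{12}=\sigma_2\sigma_1\sigma_2\,v_1v_2v_1.
\]
Relation~1 is therefore equivalent to $\sigma_1\sigma_2\sigma_1=\sigma_2\sigma_1\sigma_2$, which is exactly (B1); hence relation~1 holds in $VB_n$. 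I expect this normalisation — the careful bookkeeping of the virtual letters so that both sides acquire the same tail $v_1v_2v_1$ — to be the only real obstacle, since everything downstream is formal.

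Finally, relations 2--6 and the general identity Eq.~\ref{strbr} follow with no further computation from Remark~\ref{action}: since relation~1 is a valid relation in $VB_n$, conjugating the whole equation by any $\tau\in S_n$ produces a valid relation obtained by replacing each $\mu_{rs}$ with $\mu_{\tau(r),\tau(s)}$. Conjugating relation~1 by the six elements of the copy of $S_3$ permuting $\{1,2,3\}$ — namely the identity and $(12),(23),(13),(123),(132)$ — yields precisely relations 1--6, as the labels indicated to the right of each relation show. For arbitrary distinct $i,j,k$, choosing $\tau\in S_n$ with $\tau(1)=i$, $\tau(2)=j$, $\tau(3)=k$ and conjugating relation~1 gives $\mu_{ij}\mu_{ik}\mu_{jk}=\mu_{jk}\mu_{ik}\mu_{ij}$; restricting $\tau$ to the six bijections onto any fixed triple of strands then accounts for exactly the six relations preserving those strands.
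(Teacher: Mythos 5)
Your proposal is correct and follows essentially the same route as the paper: relation~1 is reduced to the classical braid relation (B1) by substituting $\mu_{12}=\sigma_1v_1$, $\mu_{23}=\sigma_2v_2$, $\mu_{13}=v_2\sigma_1v_1v_2$ and pushing braid generators past virtual letters via (S1), (S3) and the detour moves, and relations 2--6 together with Eq.~\ref{strbr} are then obtained by conjugating relation~1 by the $S_n$ action of Remark~\ref{action}, exactly as the paper does. The only cosmetic difference is that you normalise both sides to the common form $\sigma$-word times $v_1v_2v_1$, whereas the paper rewrites one side into the other in a single chain (and additionally sketches a diagrammatic slide-move verification of relations 2--6, which your purely algebraic conjugation argument makes unnecessary).
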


\begin{figure}
     \begin{center}
     \includegraphics[width=5cm]{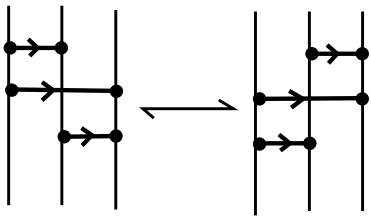}
     \caption{ The stringy braid relation}
     \label{sbr}
\end{center}
\end{figure}

\begin{proof}
Figure~\ref{sbr} illustrates relation 1. Relation~1 rests on the braid relations (B1) of $VB_n$. Indeed, let us prove one relation of this type.  See also Figure~\ref{sybpf} for a pictorial proof.

\vspace{.1in}

\noindent $\begin{array}{rcl}
\mu_{i+1,i+2} \mu_{i,i+2}  \mu_{i,i+1}  & =  & (\sigma_{i+1} \underline{v_{i+1}) (v_{i+1}} \sigma_{i} \underline{v_{i}  v_{i+1}) (\sigma_{i}} v_{i})  \\

 \ & \stackrel{(S3,M1)}{=} & \underline{\sigma_{i+1} \sigma_{i} \sigma_{i+1}} \underline{v_{i}  v_{i+1} v_{i}}    \\

 \ &  \stackrel{(B1,S1)}{=} & \sigma_{i} \sigma_{i+1} \underline{\sigma_{i} v_{i+1} v_{i}} v_{i+1} \\

 \ & \stackrel{(M1,S3)}{=} & \sigma_{i} \sigma_{i+1} \underline{v_{i+1} v_{i} v_{i+1}} v_{i+1} \sigma_{i+1} v_{i+1} \\

 \ & \stackrel{(S1)}{=}& \sigma_{i} \underline{\sigma_{i+1} v_{i} v_{i+1}} v_{i} v_{i+1} \sigma_{i+1} v_{i+1}       \\

 \ & \stackrel{(M1)}{=} &  (\sigma_{i} v_{i}) (v_{i+1} \sigma_{i} v_{i} v_{i+1}) (\sigma_{i+1} v_{i+1})      \\

 \ & = &   \mu_{i,i+1} \mu_{i,i+2} \mu_{i+1,i+2}.     \\

\end{array}$

\vspace{.1in}

The other five stated relations follow from relation~1. Indeed, substituting the $\mu_{ji}$'s from Eqs.~\ref{muibar} and \ref{muijbar}, and drawing the two sides of a relation we notice that there is always a region where, by the slide relations, all three connecting strings become consecutive without any of them having to be reversed, thus enabling application of the first relation. This diagrammatic argument confirms the fact that all six relations are derived from the first one by the action of appropriate elements of $S_n$.  Let us see how this works specifically for relation~5: the indices of relation~1 against the indices of relation~5 induce the permutation $(123) = v_2v_1$. This means that  conjugating relation~1 by the word $v_2v_1$ will yield relation~5.
 Finally, the derivation of all stringy braid relations from the action of $S_n$ on relation~1 is clear from Remark~\ref{action}.
\end{proof}

\begin{figure}
     \begin{center}
     \includegraphics[width=8cm]{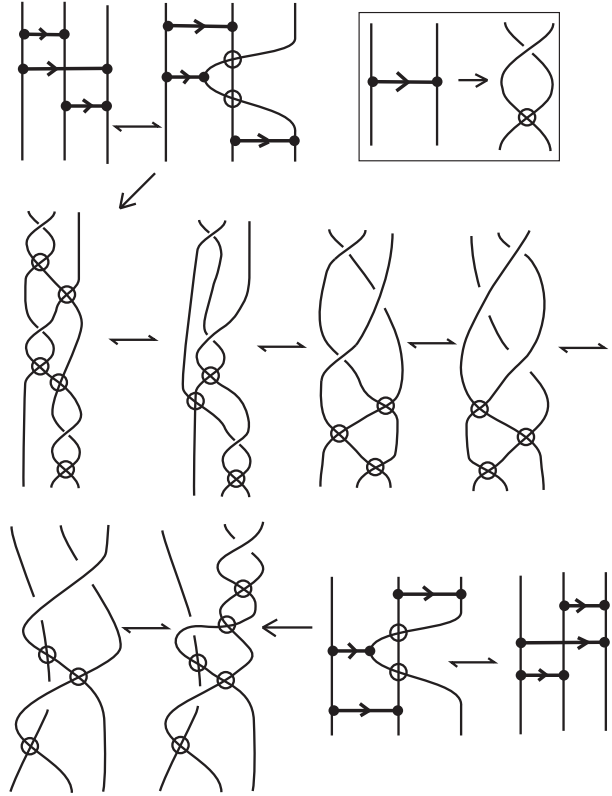}
     \caption{ Proof of the stringy braid relation}
     \label{sybpf}
\end{center}
\end{figure}

Another remark is now due.
\begin{rem}\rm The forbidden moves of virtual knot theory are naturally forbidden also in the stringy category. For example, the forbidden relations $F1, F2$ of Eq.~\ref{forbidden} translate into the following corresponding  {\it forbidden stringy relations} $SF1, SF2$:
\begin{equation}\label{strforbidden}
\mu_{i,i+2} \mu_{i+1,i+2} = \mu_{i+1,i+2} \mu_{i,i+2} \quad (SF1) \qquad \mbox{and} \qquad   \mu_{i,i+2} \mu_{i,i+1} = \mu_{i,i+1} \mu_{i,i+2}  \quad (SF2)
\end{equation}
which, together with all similar relations arising from conjugating the above by permutations, are not valid in the stringy category. See Figure~\ref{sforbidden} for illustrations.

\begin{figure}
     \begin{center}
     \includegraphics[width=8.5cm]{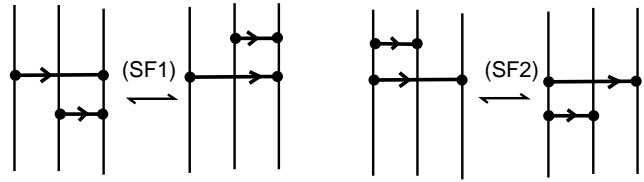}
     \caption{ Stringy forbidden moves}
     \label{sforbidden}
\end{center}
\end{figure}

\end{rem}

\subsection{The stringy presentation}

We will now define an abstract stringy presentation for $VB_n$ that starts from the concept of   connecting string  and recaptures the virtual braid group.
By Eq.~\ref{mui} we have
\begin{equation}\label{sigmai}
\sigma_{i} = \mu_{i,i+1}v_{i}
\end{equation}
so, the connecting strings $\mu_{ij}$ can be taken as an alternate set of generators of the virtual braid group,
along with the virtual generators $v_{i}$. The relations in this new presentation consist in the results  we proved above in Lemmas~\ref{stringyslides}, \ref{stringycom}, \ref{stringybr} describing the
interaction of connecting strings with virtual crossings, the commutation properties of connecting strings, the stringy braiding relations, and the usual relations $(S1), (S2), (S3)$ in the symmetric group $S_{n}$. For the work
below, recall that we have defined the element $t_{ij} = v_{i} v_{i+1} \ldots v_{j} \ldots v_{i+1} v_{i}$
that corresponds to the transposition $(ij)$ in $S_{n}$.

In any presentation of a group $G$ containing the elements $\{ v_1, \ldots, v_{n-1}\}$ and the relations $(S1), (S2), (S3)$ among them,
we have an action of the symmetric group $S_{n}$ on the group $G$ defined by conjugation by an  element $\tau$ in $S_{n}$, expressed in terms of the $v_{i}$:
\[
g^{\tau} = \tau g \tau^{-1}
\]
for $g$ in $G$.  In particular, we can consider $t_{ij}\, g \,t_{ij}$
as the action by the transposition $t_{ij}$ on an element $g$ of $G$.
We will use this action to define a stringy model of the virtual braid group.

\begin{defn}\rm  Let $VS_{n}$ denote the following stringy group presentation.
\begin{equation}\label{stringypres}
VS_{n} =  \left< \begin{array}{ll}  \begin{array}{l}
 \mu_{ij} ,\quad 1 \le i \ne j \le n,  \\
 v_1, \ldots, v_{n-1} \\
\end{array} &
\left|
\begin{array}{l}
\tau \mu_{ij}  \tau^{-1} = \mu_{\tau (i), \tau (j)}, \quad  \tau \in S_{n} \\
 \mu_{12}\mu_{13}\mu_{23} = \mu_{23}\mu_{13}\mu_{12} \\
\mu_{12}\mu_{34} = \mu_{34}\mu_{12}  \\
 (S1), (S2), (S3) \\
\end{array} \right.  \end{array} \right>
\end{equation}
\end{defn}

We can now state the following theorem.

\begin{thm} The stringy group $VS_{n}$ is isomorphic to the virtual braid group
$VB_{n}.$
\end{thm}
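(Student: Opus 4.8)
The plan is to exhibit the isomorphism concretely by constructing a pair of mutually inverse homomorphisms $\Phi \colon VS_n \to VB_n$ and $\Psi \colon VB_n \to VS_n$, each defined on generators and checked against the defining relations of its source. Well-definedness of a map out of a presentation only requires that the images of the generators satisfy the relations, so this reduces the theorem to the lemmas already proved plus a handful of short verifications.

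First I would define $\Phi$ on generators by sending each $\mu_{ij}$ to the connecting-string element of $VB_n$ built from the $\sigma$'s and $v$'s (Eqs.~\ref{mui}, \ref{muibar}, \ref{muij}, \ref{muijbar}) and each $v_i$ to $v_i$. That $\Phi$ respects the relations of $VS_n$ is essentially free from the earlier work: the conjugation relation $\tau\mu_{ij}\tau^{-1}=\mu_{\tau(i),\tau(j)}$ is Remark~\ref{action}/Eq.~\ref{mixed}, the algebraic Yang--Baxter relation $\mu_{12}\mu_{13}\mu_{23}=\mu_{23}\mu_{13}\mu_{12}$ is Lemma~\ref{stringybr}(1), the commuting relation $\mu_{12}\mu_{34}=\mu_{34}\mu_{12}$ is Lemma~\ref{stringycom}(1), and $(S1)$--$(S3)$ hold in $VB_n$ by definition. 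Hence $\Phi$ is a well-defined homomorphism, and since $\sigma_i=\Phi(\mu_{i,i+1})v_i^{-1}$ and $v_i=\Phi(v_i)$ lie in its image, $\Phi$ is surjective.

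Next I would define $\Psi$ on generators by $\sigma_i\mapsto\mu_{i,i+1}v_i$ (motivated by Eq.~\ref{sigmai}) and $v_i\mapsto v_i$, and check that the images satisfy $(B1),(B2),(M1),(M2)$. Each verification reduces to sliding every $v_i$ past the $\mu$'s via the conjugation relation (which permutes their indices) and then invoking a single stringy relation. For instance, for $(B1)$ one finds $\Psi(\sigma_1\sigma_2\sigma_1)=\mu_{12}\mu_{13}\mu_{23}\,v_1v_2v_1$ and $\Psi(\sigma_2\sigma_1\sigma_2)=\mu_{23}\mu_{13}\mu_{12}\,v_2v_1v_2$, so equality follows from the Yang--Baxter relation together with $(S1)$; $(M1)$ collapses to $(S1)$ after the same sliding; and $(M2),(B2)$ follow from the disjoint-index commuting instances. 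The subtle point here is that the presentation lists only the single relations $\mu_{12}\mu_{13}\mu_{23}=\mu_{23}\mu_{13}\mu_{12}$ and $\mu_{12}\mu_{34}=\mu_{34}\mu_{12}$, whereas $(B1),(B2)$ for general indices require the analogous relations on arbitrary strands; these are recovered by conjugating the two listed relations by a permutation $\tau\in S_n$ carrying $\{1,2,3\}$ or $\{1,2,3,4\}$ onto the required strands, which is legitimate precisely because the full conjugation family is among the defining relations of $VS_n$.

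Finally I would check that $\Phi$ and $\Psi$ are mutually inverse on generators: $\Phi\Psi(\sigma_i)=\Phi(\mu_{i,i+1}v_i)=(\sigma_iv_i)v_i=\sigma_i$ using $v_i^2=1$, and $\Phi\Psi(v_i)=v_i$; conversely $\Psi\Phi(v_i)=v_i$ and $\Psi\Phi(\mu_{i,i+1})=(\mu_{i,i+1}v_i)v_i=\mu_{i,i+1}$, with the general case $\Psi\Phi(\mu_{ij})=\mu_{ij}$ following by applying the conjugation relation with $\tau=v_{j-1}\cdots v_{i+1}$ (and similarly for the reversed strings $\mu_{ji}$ via $t_{ij}$). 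Thus $\Phi\Psi=\mathrm{id}_{VB_n}$ and $\Psi\Phi=\mathrm{id}_{VS_n}$, so $\Phi$ is an isomorphism. I expect the main obstacle to be the well-definedness of $\Psi$: one must be sure that \emph{every} instance of the braid and mixed relations $(B1),(B2),(M1),(M2)$, for all indices, is a consequence of the comparatively sparse relation set of $VS_n$, and this is exactly where the full conjugation relation does the heavy lifting by generating all index-shifted copies of the two basic stringy relations.
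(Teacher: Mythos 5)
Your proposal is correct and follows essentially the same route as the paper: the paper likewise defines mutually inverse homomorphisms on generators ($F\colon VB_n\to VS_n$, $\sigma_i\mapsto\mu_{i,i+1}v_i$, and $G\colon VS_n\to VB_n$, $\mu_{ij}\mapsto$ the concrete connecting string), verifies the relations via Lemmas~\ref{stringyslides}--\ref{stringybr} and Remark~\ref{action}, and uses the conjugation relations $\tau\mu_{ij}\tau^{-1}=\mu_{\tau(i),\tau(j)}$ to generate the index-shifted instances of the two basic stringy relations. Your explicit flagging of that last point --- that the sparse relation set of $VS_n$ suffices only because the full conjugation family is among the defining relations --- is, if anything, stated more carefully than in the paper's own verification.
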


\begin{proof} First we define a homomorphism $F: VB_{n} \longrightarrow VS_{n}$ by
$F(v_{i}) = v_{i}$ and $F(\sigma_{i}) = \mu_{i,i+1} v_{i}$, and extend the map to be a
homomorphism on words in the generators of the virtual braid group.  In order to show that this map is well-defined, we must show that it preserves the relations in the virtual braid group.
Since $F(v_{i}) = v_{i}$, the relations among the $v_{i}$ with themselves are preserved
identically. The commuting relations in the braid group are
$\sigma_{i} \sigma_{j} = \sigma_{j} \sigma_{i}$ when $|i-j| > 2$. Thus we must show that
$$
\mu_{i,i+1}v_{i} \mu_{j,j+1}v_{j} =  \mu_{j,j+1}v_{j}\mu_{i,i+1}v_{i}.
$$
But this follows immediately from relations~4 of Lemma~\ref{stringyslides} and from Lemma~\ref{stringycom}. The mixed commuting relations $(M2)$ follow also directly from relations $(S2)$ and relations~4 of Lemma~\ref{stringyslides}. This completes the verification that the commuting relations in the virtual braid group are compatible with $F$.

The detour moves $(M2)$ in the virtual braid group go under $F$ to the slide relations of Lemma~\ref{stringyslides}. We illustrate this in Figure~\ref{sdetour}.

It remains to prove that the braiding relations $(B1)$
carry over to $VS_{n}$ under $F$. Indeed:
$$F(\sigma_{i}\sigma_{i+1}\sigma_{i}) = \mu_{i,i+1}v_{i}\mu_{i+1,i+2}v_{i+1}\mu_{i,i+1}v_{i}$$
$$\stackrel{Lemma~\ref{stringyslides}}{=} \mu_{i,i+1}\mu_{i,i+2}\mu_{i+1,i+2}v_{i}v_{i+1}v_{i},$$
while
$$F(\sigma_{i+1}\sigma_{i} \sigma_{i+1}) = \mu_{i+1,i+2}v_{i+1}\mu_{i,i+1}v_{i}\mu_{i+1,i+2}v_{i+1}$$
$$\stackrel{Lemma~\ref{stringyslides}}{=} \mu_{i+1,i+2}\mu_{i,i+2}\mu_{i,i+1}v_{i+1}v_{i}v_{i+1}.$$
and the two expressions are equal from Lemma~\ref{stringybr} and relations~$(S1)$. This completes the proof that the mapping $F$ is a well-defined homomorphism of groups.

We now define an inverse mapping $G:VS_{n} \longrightarrow VB_{n}$ by
$G(v_{i}) = v_{i}$ and $G(\mu_{i, i+1}) = \sigma_{i} v_{i}$. At this stage we have two pieces of work to accomplish: We must extend $G$ to all of $VB_{n}$ and we must show that $G$ is well-defined and that it preserves the relations in the group presentation. This will be done in the next paragraphs.

First of all, we have the $VS_{n}$ relations:
$$ \tau^{-1}\, \mu_{ij} \, \tau = \mu_{\tau (i), \tau (j)}$$
for all $\tau$ in $S_{n}$. In particular, this means that if $\tau (1) = i$ and
$\tau (2) = j$, then
$$\mu_{ij} = \tau^{-1}\, \mu_{12} \, \tau.$$
Thus we can define
$$G(\mu_{ij}) =  \tau^{-1} G(\mu_{12}) \tau = \tau^{-1} \sigma_{1} v_{1} \tau.$$
It is easy to see that this is well-defined by noting that if $\lambda$ is another
permutation such that $\lambda(1) = i$ and $\lambda(2) = j$, then $\lambda = \tau \gamma$
where $\gamma$ is a permutation that fixes $1$ and $2$. But such a permutation commutes
with $\sigma_{1} v_{1}$ as is easy to see in the virtual braid group. Hence $\lambda$ can replace
$\tau$ in the formula for $G(\mu_{ij})$ with no change. We leave it as an exercise for the reader
to check that our definition of $G(\mu_{i,i+1})$ in the previous paragraph agrees with the present definition. This completes the definition of the map $G$. We now need to see that it respects the
other relations in $VB_{n}.$

We must show that
$$G(\mu_{12}\mu_{34}) = G(\mu_{34}\mu_{12}).$$
Just note that
$$ G(\mu_{12}\mu_{34}) = \sigma_{1}v_{1} \sigma_{3}v_{3} = \sigma_{3 }v_{3} \sigma_{1}v_{1} = G(\mu_{34}\mu_{12}),$$
by the commuting relations in the virtual braid group.

Finally, we must prove
$$G( \mu_{12}\mu_{13}\mu_{23}) = G(\mu_{23}\mu_{13}\mu_{12}).$$
Note that $\mu_{13} = v_{2}\mu_{12}v_{2}$, so we must prove that in the
virtual braid group,
$$\sigma_{1}v_{1} v_{2} \sigma_{1} v_{1} v_{2} \sigma_{2} v_{2} = \sigma_{2}v_{2} v_{2} \sigma_{1} v_{1} v_{2} \sigma_{2} v_{2}.$$
Figure~\ref{sybpf} illustrates how this identity follows via braiding and detour moves.

We have verified that the mapping $G$ is well-defined and, by definition,
the compositions $F\circ G$ and $G\circ F$ are the identity on $VS_{n}$ and
$VB_{n}$. Therefore $VS_{n}$ and $VB_{n}$ are isomorphic groups. This completes
the proof of the Theorem.
\end{proof}

\begin{figure}
     \begin{center}
     \includegraphics[width=4.0cm]{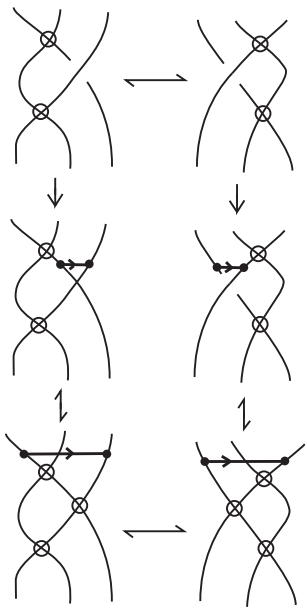}
     \caption{ The detour moves correspond to the slide moves in the stringy category }
     \label{sdetour}
\end{center}
\end{figure}

Finally, we also give below a reduced presentation for $VB_n$, which derives immediately from (\ref{brreduced}).
\begin{prop}
The following is a reduced stringy presentation for $VB_{n}$:
\begin{equation}\label{strreduced}
VB_{n} = \left< \begin{array}{ll}
\begin{array}{c}
\mu_{12}, \\
v_1, \ldots ,v_{n-1}  \\
\end{array} &
\left|
\begin{array}{l}
\mu_{12} v_j=v_j \mu_{12}, \ \ \  \mbox{for} \  j>2   \\
 \mu_{12} \, v_2 \mu_{12} v_2 \, v_1v_2 \mu_{12} v_2 v_1 =
 v_1v_2 \mu_{12} v_2 v_1 \, v_2 \mu_{12} v_2  \, \mu_{12}  \\
 \mu_{12}\, v_2 v_3 v_1 v_2  \mu_{12} v_2 v_1 v_3 v_2 =
   v_2 v_3 v_1 v_2  \mu_{12} v_2 v_1 v_3 v_2  \, \mu_{12}   \\
   (S1), (S2), (S3) \\
\end{array} \right.  \end{array} \right>
\end{equation}
\end{prop}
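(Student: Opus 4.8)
The plan is to treat the statement as a change-of-generators (Tietze) argument starting from the reduced presentation (\ref{brreduced}) that was already recorded. The two generating sets $\{\sigma_1, v_1, \ldots, v_{n-1}\}$ and $\{\mu_{12}, v_1, \ldots, v_{n-1}\}$ are interchangeable: by Eq.~(\ref{mui}) we have $\mu_{12} = \sigma_1 v_1$, and by Eq.~(\ref{sigmai}) we have $\sigma_1 = \mu_{12} v_1$, the two substitutions being mutually inverse modulo the relation $(S3)$, i.e. $v_1^2 = 1$. First I would adjoin $\mu_{12}$ as a new generator together with its defining relation $\mu_{12} = \sigma_1 v_1$, and then use this relation to eliminate $\sigma_1$ everywhere via $\sigma_1 = \mu_{12} v_1$. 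Since these are elementary Tietze transformations, the resulting presentation presents the same group, and the only remaining task is to rewrite each defining relation of (\ref{brreduced}) in the new generators and check that it becomes (a consequence of) the corresponding relation displayed in (\ref{strreduced}), and conversely.

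Next I would carry out the substitution relation by relation, simplifying with $(S1)$, $(S2)$, $(S3)$. The symmetric-group relations are untouched. The commuting relations $\sigma_1 v_j = v_j \sigma_1$ for $j > 2$ become $\mu_{12} v_1 v_j = v_j \mu_{12} v_1$; since $|1-j| \geq 2$ permits $v_1 v_j = v_j v_1$ by $(S2)$, the left side becomes $\mu_{12} v_j v_1$, and cancelling the trailing $v_1$ yields the stringy commuting relation $\mu_{12} v_j = v_j \mu_{12}$. For the two longer relations I would substitute $\sigma_1 = \mu_{12} v_1$ into each word, cancel adjacent squares $v_i v_i$ by $(S3)$, and rearrange using $(S1)$ and $(S2)$. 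A relation $A = B$ in the group is unchanged under left- and right-multiplication by arbitrary words in the generators, and this freedom is exactly what is needed to bring the substituted relation into the precise normalized form displayed in (\ref{strreduced}).

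The one genuinely laborious point, and the place I expect the main obstacle, is the bookkeeping for the two long relations. For instance, substituting $\sigma_1 = \mu_{12} v_1$ into the braid-type relation of (\ref{brreduced}) and using $v_1 \sigma_1 v_1 = v_1 \mu_{12}$ and $v_2 \sigma_1 v_2 = v_2 \mu_{12} v_1 v_2$, together with $v_1 v_2 v_1 = v_2 v_1 v_2$, produces a relation that matches the second relation of (\ref{strreduced}) only after left-multiplying by $v_1$ and right-multiplying by $v_2 v_1$; the third relation, which expresses commutation of $\sigma_1$ with a conjugate of itself, transforms similarly into commutation of $\mu_{12}$ with a conjugate of itself. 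None of these steps is conceptually deep, but they must be executed with care. Because the change of generators is invertible, it suffices to verify the correspondence in one direction: the inverse substitution $\mu_{12} = \sigma_1 v_1$ then automatically recovers (\ref{brreduced}) from (\ref{strreduced}), which confirms that (\ref{strreduced}) is indeed a presentation of $VB_n$.
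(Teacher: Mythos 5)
Your proposal is correct and follows essentially the same route as the paper, which obtains (\ref{strreduced}) from the reduced presentation (\ref{brreduced}) by exactly this Tietze change of generators, substituting $\sigma_1 = \mu_{12} v_1$ (inverse to $\mu_{12} = \sigma_1 v_1$ via $(S3)$) and normalizing the rewritten relations with $(S1)$, $(S2)$, $(S3)$ so that the second relation becomes the stringy braid relation and the third the stringy commuting relation. Your explicit bookkeeping (e.g.\ the left multiplication by $v_1$ and right multiplication by $v_2 v_1$ for the braid-type relation) checks out and merely spells out what the paper leaves as ``derives immediately.''
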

\noindent The second relation is the stringy braid relation~1 of Lemma~\ref{stringybr} and the third relation is the commuting relation~1 of Lemma~\ref{stringycom}.

\section {The Pure Virtual Braid Group}

\subsection{ A presentation for the pure virtual braid group}

 From presentation Eq.~\ref{brpresi} of $VB_{n}$ we have a surjective homomorphism $$\pi : VB_{n} \longrightarrow S_{n}$$ defined by
$$\pi(\sigma_{i}) = \pi(v_{i}) =v_{i}.$$
For a virtual braid $b$, we refer to $\pi(b)$ as the {\it permutation associated with the virtual braid $b$}, and we define the {\it pure virtual braid group} $VP_{n}$ to be the kernel of the homomorphism $\pi$. Hence, $VP_{n}$ is a normal subgroup of $VB_{n}$ of index $n!$. So, $VP_{n} \cdot S_n = VB_{n}$. Moreover, $VP_n \cap S_n = \{ id \}$. Hence, $VB_{n} = VP_{n} \rtimes S_n$. Equivalently, we have the exact sequence
\[ 1 \longrightarrow VP_{n} \longrightarrow VB_{n} \longrightarrow S_n \longrightarrow 1.\]
A presentation for $VP_n$ can be now derived immediately from the stringy presentation of $VB_{n}$ as an application of the Reidemeister-Schreier process \cite{Fox,MKS,Stillwell}. To see this, we first need the following.

\begin{lem} The subgroup $VP_{n}$ of \ $VB_{n}$ is generated by the elements $\mu_{ij}$ for all $i \neq j$.
\end{lem}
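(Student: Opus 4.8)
The plan is to exploit the semidirect product structure $VB_n = VP_n \rtimes S_n$ together with the stringy presentation from Theorem~1, where $VB_n$ is generated by the connecting strings $\mu_{ij}$ and the virtual generators $v_i$. The key observation is that each generator behaves predictably under the projection $\pi : VB_n \to S_n$. First I would compute $\pi$ on the new generators: since $\mu_{i,i+1} = \sigma_i v_i$ and $\pi(\sigma_i) = \pi(v_i) = v_i$, we get $\pi(\mu_{i,i+1}) = v_i v_i = 1$, so every elementary connecting string lies in $VP_n$. By Eq.~\ref{muij} each general $\mu_{ij}$ is a conjugate of $\mu_{i,i+1}$ by a product of $v_k$'s, and conjugation does not change membership in the normal subgroup $VP_n$; alternatively $\pi(\mu_{ij}) = 1$ directly because $\pi$ is a homomorphism and the conjugating word and its inverse cancel in $S_n$. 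Hence $\langle \mu_{ij} : i \neq j \rangle \subseteq VP_n$.

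For the reverse inclusion, let $H = \langle \mu_{ij} : i \neq j \rangle$ and take an arbitrary $b \in VP_n$. The plan is to use the stringy generating set: by Theorem~1 (equivalently Eq.~\ref{sigmai}, $\sigma_i = \mu_{i,i+1} v_i$), every element of $VB_n$ can be written as a word $w$ in the $\mu_{ij}^{\pm 1}$ and the $v_i$. I would then push all the $v_i$'s to one side. Using the slide and mixed relations of Lemma~\ref{stringyslides} (relations~1--4), which have the schematic form $v_k \mu_{rs}^{\pm 1} = \mu_{\tau(r),\tau(s)}^{\pm 1} v_k$ where $\tau$ is the transposition $(k,k+1)$, one can move any $v_i$ rightward past any connecting string at the cost of permuting that string's indices. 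Carrying out this rewriting on $w$ collects all the connecting-string factors (with permuted indices, but still of the form $\mu_{rs}^{\pm 1}$) on the left and a pure product of $v_i$'s, call it $s_w \in S_n$, on the right. Thus $b = h \cdot s_w$ with $h \in H$.

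Now I apply $\pi$: since $b \in VP_n$ we have $\pi(b) = 1$, and since $h \in H \subseteq VP_n$ we have $\pi(h) = 1$, so $\pi(s_w) = 1$. But $s_w$ is a word purely in the $v_i$, and $\pi$ restricted to $S_n = \langle v_i \rangle$ is the identity isomorphism (recall $VP_n \cap S_n = \{id\}$ and the splitting $VB_n = VP_n \rtimes S_n$); therefore $s_w = 1$ in $VB_n$. This forces $b = h \in H$, giving $VP_n \subseteq H$ and hence equality.

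The main obstacle I anticipate is the rewriting step in the second paragraph: making precise that every $v_i$ can be commuted past every connecting string while keeping the result inside $H$. This requires checking that all cases of Lemma~\ref{stringyslides} (including the generalized slide and commuting moves, and the index-reversing relations that interchange $\mu_{ij}$ with $\mu_{ji}$ via Eq.~\ref{muijbar}) suffice to move an arbitrary $v_k$ past an arbitrary $\mu_{rs}^{\pm 1}$; the subtlety is that a slide may convert $\mu_{rs}$ into $\mu_{s'r'}$ with reversed arrow, but since $\mu_{ji}$ is itself defined in $H$ via $t_{ij}\mu_{ij}t_{ij}$, the rewritten factors never leave $H$. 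Once this normal-form argument is in place, the projection computation closes the proof cleanly.
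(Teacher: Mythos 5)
Your proposal is correct and takes essentially the same route as the paper's proof: write any element as a word in the $\mu_{rs}^{\pm 1}$ and $v_k$ via $\sigma_i = \mu_{i,i+1}v_i$, slide all connecting strings to one side using the relations of Lemma~\ref{stringyslides}, and note that the residual word in the $v_k$'s represents $\pi(b)$, hence is trivial when $b \in VP_n$. Your additional care about arrow-reversal under slides and about $\pi$ being injective on $\langle v_1,\dots,v_{n-1}\rangle$ simply makes explicit what the paper's brief argument leaves implicit.
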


\begin{proof} Indeed, by Eqs.~\ref{mui} and \ref{muibar}, $\sigma_i = \mu_{i,i+1}v_{i} = v_{i} \mu_{i+1,i}$. So,  any element $b \in VB_{n}$ can be written as a product in the $\mu_{ij}$'s and the $v_{k}$'s. Furthermore, by the slide relations of Lemma~\ref{stringyslides}, all $\mu_{ij}$'s can pass to the top of the braid, leaving at the bottom a word $\tau$ in the $v_{k}$'s, such that $\tau = \pi(b)$. Thus, if $b \in VP_{n}$ then $\tau$ must be the  identity permutation. This completes the proof of the Lemma.
\end{proof}

We can now give a stringy presentation of $VP_{n}$.

\begin{thm}  The following is a presentation for the pure virtual braid group.
\begin{equation}\label{purepres}
VP_{n} = \left< \begin{array}{ll}
\begin{array}{c}
\mu_{rs}, \quad  r\neq s \\
\end{array} &
\left|
\begin{array}{l}
\mu_{ij}\mu_{ik}\mu_{jk} = \mu_{jk}\mu_{ik}\mu_{ij}, \quad \text{for all distinct } i,j,k  \\
 \mu_{ij}\mu_{kl} = \mu_{kl}\mu_{ij}, \quad \{i,j\} \cap \{k,l\} = \emptyset  \\
\end{array} \right.  \end{array} \right>
\end{equation}
\end{thm}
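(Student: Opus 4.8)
The plan is to obtain this presentation by applying the Reidemeister--Schreier process to the stringy presentation $VS_n$ of the preceding Theorem, relative to the normal subgroup $VP_n = \ker\pi$ of index $n!$. Since $VP_n$ is normal, left and right cosets coincide, and I would first fix a Schreier transversal $T$ consisting of reduced words in the virtual generators $v_1,\dots,v_{n-1}$, one per element of $S_n$ and chosen prefix-closed; then $\pi$ restricts to a bijection $T \to S_n$, and I write $\overline{g} \in T$ for the representative of the coset of $g$.

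The Schreier generators are $y_{t,x} = t\,x\,\overline{(tx)}^{-1}$ with $x$ ranging over $\{\mu_{ij}\} \cup \{v_k\}$, and they split into two families. When $x = v_k$, the word $tv_k$ is again a $v$-word, so $y_{t,v_k}$ is a word purely in the $v$'s lying in $VP_n$; because the $v_k$ generate a copy of $S_n$ with $VP_n \cap S_n = \{\mathrm{id}\}$, every such generator is trivial. When $x = \mu_{kl}$, one has $\pi(t\mu_{kl}) = \pi(t)$, so $\overline{(t\mu_{kl})} = t$ and, by the conjugation relation of $VS_n$, $y_{t,\mu_{kl}} = t\mu_{kl}t^{-1} = \mu_{\pi(t)(k),\pi(t)(l)}$. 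As $t$ runs over $T$ these exhaust the symbols $\mu_{rs}$ ($r \neq s$); this re-derives the preceding Lemma and produces exactly the generating set of the statement.

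Next I would rewrite each conjugate $t\,r\,t^{-1}$ of a defining relator $r$. The relators $(S1),(S2),(S3)$ are words in the $v$'s alone, so their rewrites are products of the trivial $v$-type generators and contribute nothing. Reading the conjugation relator $\tau\,\mu_{ij}\,\tau^{-1}\,\mu_{\tau(i),\tau(j)}^{-1}$ through the rewriting map shows that its two $\mu$-letters produce the formally distinct Schreier symbols that both name the element $\mu_{\pi(t)\tau(i),\pi(t)\tau(j)}$; hence these relators are exactly the identifications that, via Tietze moves, collapse the redundant generators down to the single symbols $\mu_{rs}$, imposing no further relation among them. Finally, since the braid relator $\mu_{12}\mu_{13}\mu_{23}\mu_{12}^{-1}\mu_{13}^{-1}\mu_{23}^{-1}$ is a pure word, every prefix keeps the accumulated representative equal to $t$, so conjugation by $t$ with $\pi(t)=\tau$ simply applies $\tau$ to all indices and yields $\mu_{\tau(1)\tau(2)}\mu_{\tau(1)\tau(3)}\mu_{\tau(2)\tau(3)} = \mu_{\tau(2)\tau(3)}\mu_{\tau(1)\tau(3)}\mu_{\tau(1)\tau(2)}$; letting $\tau$ range over $S_n$ gives precisely the relations $\mu_{ij}\mu_{ik}\mu_{jk} = \mu_{jk}\mu_{ik}\mu_{ij}$ for all distinct $i,j,k$ (Lemma~\ref{stringybr}). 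In the same way the commuting relator $\mu_{12}\mu_{34}\mu_{12}^{-1}\mu_{34}^{-1}$ yields $\mu_{ij}\mu_{kl}=\mu_{kl}\mu_{ij}$ for all $\{i,j\}\cap\{k,l\}=\emptyset$ (Lemma~\ref{stringycom}). Collecting the surviving data gives exactly the presentation Eq.~\ref{purepres}.

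The main obstacle is the bookkeeping around the conjugation relators: one must check that conjugating a rewritten relator by a transversal element $t$ acts on its indices exactly by the permutation $\pi(t)$ --- the relator-level version of Remark~\ref{action} --- and that these relators are genuinely ``used up'' in merging Schreier generators, leaving no hidden relation among the $\mu_{rs}$. A clean way to package this, and a good consistency check, is to note that $S_n$ acts on the abstract group $P$ defined by Eq.~\ref{purepres} by permuting indices --- each of the two relation families is stable under this action --- so that $P \rtimes S_n$ is defined; comparing its presentation with $VS_n$ confirms $P \cong VP_n$ and that no relations are lost or gained.
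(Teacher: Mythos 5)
Your proposal is correct and takes essentially the same route as the paper: both apply the Reidemeister--Schreier process to the stringy presentation $VS_{n}$, with coset representatives taken from the copy of $S_{n}$ generated by the $v_{k}$'s, the conjugation relations $\tau \mu_{ij}\tau^{-1} = \mu_{\tau(i),\tau(j)}$ serving only to collapse the Schreier generators onto the symbols $\mu_{rs}$, and the braiding and commuting relators sweeping out their two $S_{n}$-orbits to give exactly the stated relations. Your explicit transversal bookkeeping, together with the closing semidirect-product check (which matches the paper's earlier observation that $VB_{n} = VP_{n} \rtimes S_{n}$), simply fills in details that the paper delegates to its covering-space sketch.
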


\begin{proof} Having reformulated the presentation of the virtual braid group, the proof is now a direct application of the
Reidemeister-Schreier technique.  The relations in $VP_{n}$ arise  as conjugations of the relations in $VB_{n}$ by coset representatives of $VP_{n}$ in $VB_{n}$, which are the elements of $S_n$.    The relations $(S1), (S2), (S3)$ describe $S_{n}$ and are used for choosing the coset representatives. We now describe the process from the point of view of covering spaces.
We have $VP_{n} \subset VB_{n}$ as a normal subgroup with the subgroup $S_{n}$ acting
on it by conjugation. $VP_{n}$ is the fundamental group of the covering space $E$ of a cell complex $B$ with fundamental group $VB_{n}$, where $E$ has
group of deck transformations $S_{n}.$ Since the elements of the symmetric group lift to paths in the covering space, the relations  $\tau \mu_{ij}  \tau^{-1} = \mu_{\tau (i), \tau (j)}$  serve to describe the action of the symmetric group on the loops in the covering space (these loops are the lifts of the elements $\mu_{ij}$).
We choose basic  relations in $VP_{n}$ to be  the lifts at a specific basepoint  of  the braiding relation
 $ \mu_{12}\mu_{13}\mu_{23} = \mu_{23}\mu_{13}\mu_{12}$
 and the commuting relation $\mu_{12}\mu_{34} = \mu_{34}\mu_{12}$. All other relations are obtained from these by the action of $S_{n}$, and  all relations constitute  the two orbits of the basic relations under this action. For example  the
relations
$$\mu_{ij}\mu_{ik}\mu_{jk} = \mu_{jk}\mu_{ik}\mu_{ij}$$
constitute the orbit under the action of $S_{n}$ on the single basic braiding relation
$$\mu_{12}\mu_{13}\mu_{23} = \mu_{23}\mu_{13}\mu_{12}.$$
The same pattern applies to the commuting relations.
This gives the statement of the Theorem and completes the proof.
\end{proof}

\subsection {Semi-Direct Product Structure}

The virtual braid group and the pure virtual braid group can be described in terms of semi-direct products of groups, just as is begun
in the paper by Bardakov \cite{Bardakov} and continued in \cite{Paris}.  In this section we remark that these decompositions are based on the following algebra:
The Yang-Baxter relation has the generic form $$\mu_{i,i+1}\mu_{i,i+2}\mu_{i+1,i+2} = \mu_{i+1,i+2}\mu_{i,i+2}\mu_{i,+1}$$
which is abstractly in the form $$ABC = CBA$$ and can be rewritten in the form $B^{-1}ABC = B^{-1}CBA$ or
$$A^{B} = C^{B}AC^{-1}.$$ This allows one to rewrite some of the Yang-Baxter relations in terms of the conjugation action of the group
on itself, and this is the key to the structural work pioneered by Bardakov.

\section{A String  Category for the Virtual Braid Group}

In this section we summarize our results by pointing out that the string connectors and the virtual crossings can be regarded as generators of a category whose algebraic structure yields the virtual
braid group and the pure virtual braid group. There are many relations in the definition of
this category. These relations all act to make the string connection a topological model of a logical connection between strands in this tensor category. The specific topological interpretations of all these relations have been discussed in the preceding sections of this paper.
\smallbreak

We define a strict monoidal category with generating morphisms
$\mu_{ij}$ where this symbol is interpreted as  an abstract string or connection between
strands $i$ and $j$ in a diagram that otherwise is an identity braid on $n$ strands  just as defined  in the previous sections. The other generators of this category are morphisms $v_{i}$ that are
interpreted as virtual crossings between strings $i$ and $i +1.$ The generators $v_{i}$ have all the relations for transpositions generating the symmetric group. Compositions of these elements generate
the morphisms of the category. The relations among these morphisms are exactly the relations
described for the $v_{k}$ and the $\mu_{ij}$ in the previous sections. We will now define this category
using a minimal number of generators.

\begin{defn} \label{sc}  \rm
Consider the strict monoidal category freely generated by one object $*$ and three morphisms
$$\mu: * \otimes * \longrightarrow * \otimes *,$$ $$\mu': * \otimes * \longrightarrow * \otimes *,$$
and $$v:* \otimes * \longrightarrow * \otimes *.$$ Let $\mu_{12} = \mu \otimes id_{*},$
$\mu_{21} = \mu' \otimes id_{*},$
$v_{1} = v \otimes id_{*}, v_{2} = id_{*} \otimes v.$ Here we express these elements in
three strands (tensor factors). For an arbitrary number of tensor factors, we write
$$v_{i} = id_{*} \otimes  \cdots  \otimes id_{*} \otimes  v  \otimes  id_{*}  \otimes  \cdots  \otimes id_{*}$$ where $v$ occurs in the $i$-th place in this tensor product. More generally, it is understood that
 $$\mu_{12} = \mu \otimes id_{*} \otimes  \cdots \otimes id_{*}$$
 and that $$\mu_{21} = \mu' \otimes id_{*} \otimes  \cdots \otimes id_{*}$$
 for an arbitrary number of tensor factors.
 \smallbreak

\noindent For each natural number $n$, the symbols
$$[n]=  * \otimes * \otimes \cdots \otimes *$$ with $n$ $*$'s  are the objects in the category. One can regard $[n]$ as an ordered  row of $n$ points that constitute the top or the bottom of a diagram involving $n$ strands.
\smallbreak

Now quotient this category by the following relations (compare with the reduced presentation of the virtual
braid group in Proposition 1).

\begin{enumerate}
\item $\mu \mu' = id_{* \otimes *} = \mu' \mu ,$

\item $v v = id_{*},$

\item $\mu_{12} v_j=v_j \mu_{12}, \ \ \  \mbox{for} \  j>2 , $

\item $ \mu_{12} \, v_2 \mu_{12} v_2 \, v_1v_2 \mu_{12} v_2 v_1 =
 v_1v_2 \mu_{12} v_2 v_1 \, v_2 \mu_{12} v_2  \, \mu_{12} , $

 \item $\mu_{12}\, v_2 v_3 v_1 v_2  \mu_{12} v_2 v_1 v_3 v_2 =
v_2 v_3 v_1 v_2  \mu_{12} v_2 v_1 v_3 v_2  \, \mu_{12},   $

\item $v_{i} v_{i+1} v_{i}  =  v_{i+1} v_{i}  v_{i+1}, $

\item $v_i v_j =  v_j v_i,  \mbox{for} \ j\neq i\pm 1.$

\end{enumerate}
This quotient is called the {\em String Category} and denoted $SC.$ The category $SC$ is still strict
monoidal.
\end{defn}

To recapture the connecting string morphisms $\mu_{ij}$ in the String Category context, we follow the formalism of the previous sections. Define
$$\mu_{i,i+1} = id_{*} \otimes \cdots   \otimes id_{*} \otimes \mu \otimes  id_{*} \otimes  \cdots  \otimes id_{*}$$ where
$\mu$ occurs in the $i$ and $i+1$ places in the tensor product and define
$$\mu_{i+1,i} = id_{*} \otimes  \cdots  \otimes id_{*} \otimes \mu' \otimes  id_{*} \otimes  \cdots   \otimes id_{*}$$ where
$\mu'$ occurs in the $i$ and $i+1$ places in the tensor product.
Define, for $i<j$, the element $\mu_{ij}$ by the formula
\begin{equation}\label{catmuij}
\mu_{ij} = v_{j-1} v_{j-2} \ldots v_{i+1} \, \mu_{i,i+1} \, v_{i+1} \ldots v_{j-2} v_{j-1}.
\end{equation}
and define
\begin{equation}\label{catmuji}
\mu_{ji} = v_{j-1} v_{j-2} \ldots v_{i+1} \, \mu_{i+1,i} \, v_{i+1} \ldots v_{j-2} v_{j-1}.
\end{equation}

\begin{rem} \rm
In this notation, relation $(4)$ in Definition \ref{sc} becomes the algebraic Yang-Baxter equation
$$\mu_{12} \mu_{13} \mu_{23} = \mu_{23} \mu_{13} \mu_{12},$$ and relation $(5)$ becomes the
commuting relation $$\mu_{12} \mu_{34} = \mu_{34} \mu_{12}.$$

Then one has, as consequences,  the general algebraic Yang-Baxter equation and commuting relations, as we have described them in earlier sections of the paper:
$$\mu_{ij}\mu_{ik}\mu_{jk} = \mu_{jk}\mu_{ik}\mu_{ij}, \quad \text{for all distinct } i,j,k$$
and
$$\mu_{ij}\mu_{kl} = \mu_{kl}\mu_{ij}, \quad \{i,j\} \cap \{k,l\} = \emptyset.$$

  Diagrammatically, $\mu_{ij}$ consists in
$n$ parallel strands with a string connector between the $i$-th and $j$-th strands directed from
 $i$ to $j.$ Similarly, $v_{i}$ corresponds to a diagram of $n$ strands where there is a virtual crossing
 between the $i$-th  and $(i+1)$-st strands. An $n$-strand diagram that is a product of
these generators is regarded as a morphism from $[n]$ to $[n]$ for $n$ any natural number.  We interpret $\mu_{ij}$ and $v_{i}$ diagrammatically
according to the conventions previously established in this paper.
\end{rem}

The morphisms $v_{i}$ effect the action of the symmetric group and the category models the
 virtual braid group in the following precise sense.

\begin{thm}
The virtual braid group on
$n$ strands is isomorphic to the group of morphisms from $[n]$ to $[n]$  in the String Category.
\end{thm}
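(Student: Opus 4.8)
The plan is to show that the monoid $\mathrm{End}_{SC}([n])$ of morphisms from $[n]$ to $[n]$ is in fact a group carrying exactly the reduced stringy presentation~\eqref{strreduced}, and then to invoke Proposition 1 together with Theorem 1. First I would observe that every morphism $[n]\to[n]$ is a composite of the basic morphisms $\mu_{i,i+1}$, $\mu_{i+1,i}$ and $v_i$ for $1\le i\le n-1$, and that relation $(1)$ ($\mu\mu'=\mu'\mu=\mathrm{id}$) and relation $(2)$ ($vv=\mathrm{id}$) render each of these invertible; hence $\mathrm{End}_{SC}([n])$ is a group, call it $G_n$. Using the slide moves of Lemma~\ref{stringyslides}, read in $SC$ via the definitions~\eqref{catmuij} and~\eqref{catmuji}, one rewrites each $\mu_{i,i+1}$ as the virtual conjugate $(v_{i-1}\cdots v_1)(v_i\cdots v_2)\,\mu_{12}\,(v_2\cdots v_i)(v_1\cdots v_{i-1})$, so that $G_n$ is generated by $\mu_{12}$ together with $v_1,\ldots,v_{n-1}$.

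Next I would match the defining relations $(1)$--$(7)$ of $SC$, expressed in these generators, against the relations of~\eqref{strreduced}. Relations $(2),(6),(7)$ are precisely $(S3),(S1),(S2)$; relation $(3)$ is the commuting relation $\mu_{12}v_j=v_j\mu_{12}$ for $j>2$; and, after introducing the abbreviations $\mu_{13}=v_2\mu_{12}v_2$ and $\mu_{34}=v_2v_3v_1v_2\mu_{12}v_2v_1v_3v_2$ dictated by~\eqref{catmuij}, relations $(4)$ and $(5)$ become the stringy braid relation $\mu_{12}\mu_{13}\mu_{23}=\mu_{23}\mu_{13}\mu_{12}$ of Lemma~\ref{stringybr} and the commuting relation $\mu_{12}\mu_{34}=\mu_{34}\mu_{12}$ of Lemma~\ref{stringycom}. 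Thus $G_n$ carries exactly the presentation~\eqref{strreduced}, which by Proposition 1 presents $VB_n$.

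To conclude I would exhibit the isomorphism explicitly rather than merely appeal to a coincidence of presentations. Define $\Phi\colon VB_n\to G_n$ on generators by $\Phi(\sigma_i)=\mu_{i,i+1}v_i$ and $\Phi(v_i)=v_i$, and $\Psi\colon G_n\to VB_n$ by $\Psi(\mu_{12})=\sigma_1 v_1$ and $\Psi(v_i)=v_i$, extending $\Psi$ to all $\mu_{ij}$ through the conjugation rule as in the proof of Theorem~1. Both maps are well defined because each carries the listed relations of its source to relations valid in its target -- this is exactly the content of the computations already recorded in the proofs of Theorem~1 and Proposition~1 -- and on generators $\Phi$ and $\Psi$ are visibly mutually inverse, giving $G_n\cong VB_n$.

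The step I expect to be the main obstacle is the generation claim of the first paragraph: one must be certain that the morphism obtained by placing $\mu$ in positions $i,i+1$ for $i>1$ is genuinely \emph{equal} in $SC$ to the virtual conjugate of $\mu_{12}$, i.e.\ that the defining relations $(1)$--$(7)$ already force enough naturality of the $v_i$ with respect to $\mu$ to slide the string connector along the strands. This is the categorical incarnation of the detour and slide moves, and it is the point at which the whole identification hinges; once it is secured, the matching of relations and the construction of $\Phi,\Psi$ are routine transcriptions of the group-level arguments of Sections 2 and 3.
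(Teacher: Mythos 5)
Your overall strategy is the paper's own: the published proof is a one-line appeal to Proposition 1, and your first two paragraphs simply make explicit the intended bookkeeping (invertibility from relations $(1)$--$(2)$, matching $(2),(6),(7)$ with $(S3),(S1),(S2)$, and reading relations $(3)$--$(5)$ as the relations of the reduced presentation \eqref{strreduced} via the abbreviations from Eq.~\eqref{catmuij}); the explicit maps $\Phi,\Psi$ add nothing beyond what Theorem~1 already provides. So the comparison reduces to the issue you yourself flag at the end.

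That flagged step is not a deferrable routine check: it is a genuine gap, and in fact the identity $\mu_{i,i+1}=(v_{i-1}\cdots v_1)(v_i\cdots v_2)\,\mu_{12}\,(v_2\cdots v_i)(v_1\cdots v_{i-1})$ is \emph{not} derivable from relations $(1)$--$(7)$ of Definition~\ref{sc} as literally stated. To see this, define a monoid homomorphism $\phi$ from $\mathrm{End}_{SC}([n])$ to $\mathbb{Z}^{n-1}$ by sending the whiskered generator with $\mu$ (resp.\ $\mu'$) in positions $k,k+1$ to $x_k$ (resp.\ $-x_k$) and every $v_i$ to $0$, where $x_k$ denotes the $k$-th standard basis vector. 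Every defining relation of $SC$, in every whiskered position, is balanced under this count: relation $(1)$ contributes $x_k-x_k=0$, relations $(4)$ and $(5)$ contribute $3x_k$ and $2x_k$ to both sides, the remaining relations contribute $0$, and the interchange law of the free strict monoidal category is respected since the target is abelian. Hence $\phi$ descends to $SC$; but $\phi(\mu_{2,3})=x_2$ while $\phi(v_1v_2\,\mu_{12}\,v_2v_1)=x_1$, so $\mu_{2,3}\neq v_1v_2\,\mu_{12}\,v_2v_1$ in $SC$, your generation claim fails, and $\mathrm{End}_{SC}([n])$ is strictly larger than the group presented by \eqref{strreduced} for $n\geq 3$. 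The repair is to build the slide moves of Lemma~\ref{stringyslides} into the category itself, e.g.\ by adding to Definition~\ref{sc} the relations $\mu_{2,3}\,v_1v_2 = v_1v_2\,\mu_{12}$ and its analogue for $\mu'$ (whiskering then yields $\mu_{k+1,k+2}=v_kv_{k+1}\,\mu_{k,k+1}\,v_{k+1}v_k$ for all $k$, hence the generation claim); alternatively one must read $\mu_{i,i+1}$ for $i\geq 2$ as \emph{notation} for the conjugate of $\mu_{12}$ rather than as the whiskered generator. The paper's one-line proof (and the Remark following Definition~\ref{sc}, which reads relation $(4)$ as the algebraic Yang--Baxter equation only via this identification) silently assumes the same thing, so you have correctly located the weak point of the argument --- but locating it is not the same as discharging it, and as stated it cannot be discharged.
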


\begin{proof}
By Proposition 1, for any positive integer $n,$ the group of endomorphisms of the object
$ [n] = *^{\otimes n}$ is isomorphic to $VB_{n}.$
\end{proof}

The point of this categorical formulation of the virtual braid groups is that we see how these groups form a natural extension of the symmetric groups by formal elements that satisfy the algebraic Yang-Baxter equation.
The category we desribe is a natural structure for an algebraist interested in exploring formal properties of the algebraic Yang-Baxter equation. It should be remarked that the relationship between the relations
in the virtual pure braid group and the algebraic Yang-Baxter equation was also pointed out in
\cite{BER}. See also \cite{Bellingeri} Remark 10. We have taken this observation further to point out that the virtual braid group is a direct result of forming a convenient category associated with the algebraic Yang-Baxter equation.
\smallbreak

For the reader who would like to take the String Category $SC$ as a starting point for the theory of virtual
braids, here is a description of how to read our figures for that purpose. Figure~\ref{genrs} illustrates the
permutation generators $v_{i}$ for the String Category.  The braiding elements  $\sigma_{i}$ will be
defined in terms of the string generators. Elementary connecting strings are given in Figure~\ref{elstrings}.
It is implicit in Figure~\ref{elstrings} how to define the braiding elements $\sigma_{i}$  by composing
string generators with permutations (virtual crossings). See also Figure~\ref{relstrings}, which  illustrates basic relationships among
string generators, permutations and braiding operators. Figure~\ref{longstrings} illustrates the general connecting strings and their relations with the permutation operators. In particular, Figure~\ref{longstrings} shows how any string
connection can be written in terms of a basic string generator and a product of permutations.
Figure~\ref{exchange}
illustrates how $\mu_{ij}$ and $\mu_{ji}$ are related diagrammatically. Figures~\ref{slides},~\ref{slidepf} and~\ref{commute}  show the basic slide relations between string connections and permutations.
Figure~\ref{sbr} illustrates the
algebraic Yang-Baxter relation as it occurs for the string connectors.

\section{Representations of the Virtual and Pure Virtual Braid Groups}

\subsection{}

Let $A$ be an algebra over a ground ring $k$. Let $\rho \in A \otimes A$ be an element of the tensor product of $A$ with itself. Then $\rho$ has the form given by the following equation
\begin{equation}
\rho = \sum_{i = 1}^{N} e_{i} \otimes e^{i}
\end{equation}
where $e_{i}$ and $e^{j}$ are elements of the algebra $A$.
We will write this sum symbolically as
\begin{equation}
\rho = \sum e \otimes e'
\end{equation}
where it is understood that this is short-hand for the above specific summation.

We then define, for $i < j$,  $\rho_{ij} \in A^{\otimes n}$ by the equation
\begin{equation}
\rho_{ij} = \sum 1_{A} \otimes \cdots \otimes 1_{A} \otimes  e \otimes 1_{A} \otimes \cdots \otimes 1_{A} \otimes e' \otimes 1_{A} \otimes \cdots \otimes 1_{A}
\end{equation}
where the $e$ occurs in the $i$-th tensor factor and the $e'$ occurs in the $j$-th tensor factor.

 With $i < j$ we also define $\rho_{ji}$ by reversing the roles of $e$ and $e'$ as shown in the next equation
\begin{equation}
\rho_{ij} = \sum 1_{A} \otimes \cdots \otimes 1_{A} \otimes  e' \otimes 1_{A} \otimes \cdots \otimes 1_{A} \otimes e \otimes 1_{A} \otimes \cdots \otimes 1_{A}
\end{equation}
where $e'$ occurs in the $i$-th tensor factor and $e$ occurs in the $j$-th tensor factor.

We say that $\rho$ is a {\it solution to the algebraic Yang-Baxter equation} if it satisfies, in
 $A^{\otimes n}$ for $n \geq 3,$  the equation
\begin{equation} \label{ybe}
\rho_{12} \rho_{13} \rho_{23} = \rho_{23} \rho_{13} \rho_{12}.
\end{equation}

It is immediately obvious that if $\rho$ satisfies the algebraic Yang-Baxter equation, then, for any
pairwise distinct $i,  j, k$ we have
\begin{equation}
\rho_{ij} \rho_{ik} \rho_{jk} = \rho_{jk} \rho_{ik} \rho_{ij}.
\end{equation}
This gives all possible versions of the  algebraic Yang-Baxter equation occuring in the tensor product
$A^{\otimes n}.$
\smallbreak

The following proposition is an immediate consequence of our presentation for the pure virtual braid group.

\begin{prop} Let $VP_{n}$ denote the pure virtual braid group with generators $\mu_{ij}$ and relations as given in
Theorem~2 of Section~3. Let $A$ be an algebra with an invertible algebraic solution to the Yang-Baxter equation denoted by
$\rho \in A \otimes A$ as described above. Define $$rep: VP_{n} \longrightarrow A^{\otimes n}$$ by the equation
\[
rep(\mu_{ij}) = \rho_{ij}.
\]
Then $rep$ extends to a representation of the the virtual braid group to the tensor algebra $A^{\otimes n}$.
\end{prop}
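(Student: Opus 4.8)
The plan is to use the finite presentation of $VP_{n}$ established in Theorem~2 together with the universal property of group presentations: a map defined on generators extends to a group homomorphism exactly when the images satisfy every defining relation and lie in the target group. Since Theorem~2 presents $VP_{n}$ on the generators $\mu_{rs}$ ($r \neq s$) subject only to the algebraic Yang-Baxter relations $\mu_{ij}\mu_{ik}\mu_{jk} = \mu_{jk}\mu_{ik}\mu_{ij}$ (for all distinct $i,j,k$) and the disjoint-commuting relations $\mu_{ij}\mu_{kl} = \mu_{kl}\mu_{ij}$ (for $\{i,j\}\cap\{k,l\}=\emptyset$), it suffices to check, for the assignment $\mu_{ij}\mapsto\rho_{ij}$, that each $\rho_{ij}$ is a unit of $A^{\otimes n}$ and that these two families of relations hold among the $\rho_{ij}$.

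First I would record invertibility. For fixed $i<j$, the assignment placing the two tensor factors of $A\otimes A$ into positions $i$ and $j$ (and $1_{A}$ elsewhere) is a unital algebra homomorphism $A\otimes A \to A^{\otimes n}$, so it carries units to units. Since $\rho$ is an invertible solution by hypothesis, $\rho_{ij}$ is invertible in $A^{\otimes n}$ with inverse $(\rho^{-1})_{ij}$, and likewise for $\rho_{ji}$. Hence $rep$ does land in the group of units $(A^{\otimes n})^{\times}$, which is what is needed for a representation rather than a mere monoid map.

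Next I would verify the two relation families. The algebraic Yang-Baxter relations are precisely the statement, recorded in the paragraph preceding the proposition, that an invertible solution $\rho$ satisfies $\rho_{ij}\rho_{ik}\rho_{jk} = \rho_{jk}\rho_{ik}\rho_{ij}$ for \emph{every} triple of distinct indices; I would simply cite that remark, taking care that it covers all index orderings demanded by Theorem~2, not merely the increasing ones. The commuting relations are immediate: when $\{i,j\}\cap\{k,l\}=\emptyset$, the elements $\rho_{ij}$ and $\rho_{kl}$ are the identity outside disjoint sets of tensor factors, and two such elements of $A^{\otimes n}$ commute factorwise. With both families verified, the universal property yields a well-defined homomorphism $rep\colon VP_{n} \to (A^{\otimes n})^{\times}$.

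I do not expect a serious obstacle: the entire force of the result is front-loaded into Theorem~2, which pares the presentation of $VP_{n}$ down to exactly the algebraic Yang-Baxter and commuting relations, so a solution $\rho$ respects them automatically. The only points demanding care are those above, namely confirming invertibility so the image sits in a group, and checking the Yang-Baxter relation in all index orders. If one reads the conclusion literally as extending to the full virtual braid group $VB_{n} = VP_{n} \rtimes S_{n}$, the remaining step is to let each $v_{i}$ act on $A^{\otimes n}$ by transposing the $i$-th and $(i+1)$-st tensor factors; this permutation action satisfies $\tau\,\rho_{ij}\,\tau^{-1} = \rho_{\tau(i),\tau(j)}$, because relabelling tensor factors sends $\rho$ in positions $i,j$ to $\rho$ in positions $\tau(i),\tau(j)$, matching the conjugation relation of the stringy presentation, so the semidirect product structure assembles the two pieces into a representation of $VB_{n}$.
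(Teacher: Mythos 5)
Your proposal is correct and follows essentially the same route as the paper: verify that the $\rho_{ij}$ satisfy the algebraic Yang--Baxter relations (for all distinct triples, as noted in the discussion preceding the proposition) and the disjoint-index commuting relations, then invoke the presentation of $VP_{n}$ from Theorem~2; your explicit invertibility check and your closing remark on extending to $VB_{n}$ via tensor-factor transpositions are sound additions, the latter being exactly the content of the paper's subsequent proposition (where the target is enlarged to $Aut(A^{\otimes n})$, since the transposition operators are not elements of $A^{\otimes n}$ itself).
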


\begin{proof}
It follows at once from the definitions of the $\rho_{ij}$ that $\rho_{ij}\rho_{kl} = \rho_{kl}\rho_{ij}$ whenever the sets
$\{i,j\}$ and $\{k,l\}$ are disjoint. Thus, we have shown that the $\rho_{ij}$ satisfy all the relations in the pure   virtual  braid group.
This completes the proof of the Proposition.
\end{proof}

Next, we show how to obtain representations of the full virtual braid group. To this purpose, consider the algebra $Aut(A^{\otimes n})$ of
linear automorphisms of $A^{\otimes n}$ as a module over $A$. Assume that we are given an invertible
solution to the algebraic Yang-Baxter equation,
$\rho \in A \otimes A$, and define $\tilde{\rho}_{ij}:A^{\otimes n} \longrightarrow A^{\otimes n} $ by the equation
$\tilde{\rho}_{ij}( \alpha ) = \rho_{ij} \alpha$ where $\alpha \in A^{\otimes n}.$ Since $\rho$ is invertible, $\tilde{\rho}_{ij} \in Aut(A^{\otimes n})$. Let $P_{ij} : A^{\otimes n} \longrightarrow A^{\otimes n}$ be the mapping that interchanges the $i$-th and $j$-th tensor
factors.  Note that  $P_{ij} \in Aut(A^{\otimes n}).$ We let $P_{i}$ denote $P_{i,i+1}.$  We now define
$$Rep: VB_{n} \longrightarrow Aut(A^{\otimes n})$$ by
the equations
$$Rep(\mu_{ij}) = \tilde{\rho}_{ij} \ \ \ \mbox{and} \ \ \  Rep(v_{i}) = P_{i}.$$
The next proposition is a consequence of  presentation~(\ref{stringypres}) for the virtual braid group.

\begin{prop} The mapping $Rep:VB_{n} \longrightarrow Aut(A^{\otimes n})$, defined above, is a representation of the virtual braid group to a subgroup of $Aut(A^{\otimes n}).$
\end{prop}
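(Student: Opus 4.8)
The plan is to verify that the assignment $Rep(\mu_{ij}) = \tilde{\rho}_{ij}$, $Rep(v_i) = P_i$ respects every defining relation of the presentation~(\ref{stringypres}) for $VB_n$. Since $VS_n \cong VB_n$ by Theorem~1, it suffices to check the relations listed in that presentation: the conjugation relations $\tau \mu_{ij} \tau^{-1} = \mu_{\tau(i),\tau(j)}$, the algebraic Yang-Baxter relation $\mu_{12}\mu_{13}\mu_{23} = \mu_{23}\mu_{13}\mu_{12}$, the far commutation $\mu_{12}\mu_{34} = \mu_{34}\mu_{12}$, and the symmetric-group relations $(S1),(S2),(S3)$. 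The overall structure is: check each family of relations holds among the operators $\tilde{\rho}_{ij}$ and $P_i$ inside $Aut(A^{\otimes n})$, and conclude that $Rep$ is a well-defined homomorphism; invertibility of each generator (guaranteed since $\rho$ is invertible and each $P_i$ is an involution) then places the image in a subgroup.

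First I would dispose of the easy pieces. The relations $(S1),(S2),(S3)$ among the $P_i$ are just the standard fact that the transposition operators on tensor factors satisfy the symmetric-group relations: $P_i^2 = id$, $P_iP_{i+1}P_i = P_{i+1}P_iP_{i+1}$, and $P_iP_j = P_jP_i$ for $|i-j|>1$. The far commutation $\tilde{\rho}_{12}\tilde{\rho}_{34} = \tilde{\rho}_{34}\tilde{\rho}_{12}$ reduces, since these are left multiplications, to the algebra identity $\rho_{12}\rho_{34} = \rho_{34}\rho_{12}$, which holds because the two elements act on disjoint tensor factors. The algebraic Yang-Baxter relation $\tilde{\rho}_{12}\tilde{\rho}_{13}\tilde{\rho}_{23} = \tilde{\rho}_{23}\tilde{\rho}_{13}\tilde{\rho}_{12}$ similarly reduces to $\rho_{12}\rho_{13}\rho_{23} = \rho_{23}\rho_{13}\rho_{12}$, which is precisely Equation~(\ref{ybe}), the hypothesis that $\rho$ solves the algebraic Yang-Baxter equation.

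The step I expect to be the crux is the conjugation relation $P_\tau\, \tilde{\rho}_{ij}\, P_\tau^{-1} = \tilde{\rho}_{\tau(i),\tau(j)}$ for $\tau \in S_n$ realized as a product of the $P_k$. Because $\tilde{\rho}_{ij}$ is left multiplication by $\rho_{ij}$ and $P_\tau$ is an algebra automorphism permuting tensor factors, for any $\alpha \in A^{\otimes n}$ one computes $P_\tau \tilde{\rho}_{ij} P_\tau^{-1}(\alpha) = P_\tau\big(\rho_{ij}\cdot P_\tau^{-1}(\alpha)\big) = P_\tau(\rho_{ij}) \cdot \alpha$, using that $P_\tau$ is multiplicative. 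The content is then the identity $P_\tau(\rho_{ij}) = \rho_{\tau(i),\tau(j)}$: permuting the tensor slots carries the element with $e$ in slot $i$ and $e'$ in slot $j$ to the element with $e$ in slot $\tau(i)$ and $e'$ in slot $\tau(j)$, which is exactly $\rho_{\tau(i),\tau(j)}$ by definition (and this matches the orientation conventions distinguishing $\rho_{ij}$ from $\rho_{ji}$). It is here that one must be careful that the diagrammatic/index-permutation action used abstractly in Remark~\ref{action} agrees with the concrete action of $P_\tau$ by factor permutation; establishing this compatibility is the heart of the verification. Once it is in place, every relation of~(\ref{stringypres}) is satisfied, so $Rep$ is a well-defined homomorphism into $Aut(A^{\otimes n})$, and since all generators are invertible its image lies in a subgroup, completing the proof.
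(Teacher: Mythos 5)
Your proposal is correct and follows essentially the same route as the paper: the symmetric-group relations for the $P_i$, the Yang--Baxter and far-commutation relations for the $\tilde{\rho}_{ij}$ (already handled in the paper via Proposition~3 on $VP_n$), and the conjugation relations $P_\tau \tilde{\rho}_{ij} P_\tau^{-1} = \tilde{\rho}_{\tau(i),\tau(j)}$, which the paper declares ``immediate'' from the placement of the $e$ and $e'$ factors. Your only addition is to spell out that immediacy --- the multiplicativity of $P_\tau$ giving $P_\tau \tilde{\rho}_{ij} P_\tau^{-1}(\alpha) = P_\tau(\rho_{ij})\cdot\alpha$ and the slot-permutation identity $P_\tau(\rho_{ij}) = \rho_{\tau(i),\tau(j)}$ --- which is a faithful elaboration of the paper's argument rather than a different one.
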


\begin{proof} It is clear that the elements $P_{i}$ obey all the relations in the symmetric group $S_{n}$.
By presentation~(\ref{stringypres}) it remains to show that letting $\lambda = Rep(\tau)$ where $\tau$ is an element of
$S_{n}$, the relations
$$\lambda \rho_{ij} \lambda^{-1} =  \tilde{\rho}_{\tau(i), \tau(j)}, \ \ \  \tau \in S_{n}$$
are satisfied in $Aut(A^{\otimes n}).$
Since $\rho_{ij}$ is defined via the placement of the $e$ and $e'$ factors in the summation for $\rho$ on the $i$-th and $j$-th strands, these
relations are immediate. This completes the proof of the proposition.
\end{proof}

\begin{rem} \rm The method we have described for constructing a representation of the
virtual braid group from an algebraic solution to the Yang-Baxter equation generalizes the well-known
construction of a representation of the classical Artin braid group from a solution to the Yang-Baxter equation in braided form. In the usual method for constructing the classical representation, one composes the algebraic solution with a permutation, obtaining a solution to the braiding equation
$(B1)$. This composition is the same as our relation
$$
\sigma_{i}  = \mu_{i,i+1} v_{i}
$$
between the braiding element $\sigma_{i}$ and the stringy generator
$\mu_{i,i+1}$ for the pure virtual  braid group. Without the concept of virtuality, the direct relationship
of the algebraic Yang-Baxter equation with the braid groups would not be apparent. We see that, from an algebraic point of view, the virtual braid group is an entirely natural construction. It is the universal
algebraic structure related to viewing solutions to the algebraic Yang-Baxter equation inside tensor products of algebras and endowing these tensor products with the natural permutation action of the symmetric group.
\end{rem}

Solutions to the algebraic Yang-Baxter equation are usually thought of as deformations of the identity mapping on a two-fold tensor product $A \otimes A.$ We think of a braiding operator as a deformation of
a transposition, and so one goes between the algebraic and braided versions of such operators by
composition with a transposition.
\smallbreak

The Artin braid group $B_{n}$ is motivated by a combination of topological considerations and the desire for a group structure that is very close to the structure of the symmetric group $S_{n}$.
We have seen that the virtual braid group $VB_{n}$ is motivated at first by a natural extension of
the Artin braid group in the context of virtual knot theory, but now we see a different motivation for
the virtual braid group. Given that one studies the algebraic Yang-Baxter equation in the
context of tensor powers of an algebra $A$, it is thoroughly natural to study the compositions
of algebraic braiding operators placed in two out of the $n$ tensor lines (the stringy generators) and to
let the permutation group of the tensor lines act on this algebra. As we have seen in (\ref{stringypres}), this
is precisely the virtual braid group. Viewed in this way, the virtual braid group has nothing to do with the plane and nothing to do with virtual crossings. It is a natural group associated with the structure of
algebraic braiding.

\subsection{A Representation Category for the String Category}

We now give a categorical interpretation of virtual knot theory and the virtual braid group in terms
of representation modules associated to an algebra $A$ over a commutative ring $k$ with an algebraic Yang-Baxter element  $\rho$ as above.
Let  $End(A^{\otimes n})$ denote the linear endomorphisms of  $A^{\otimes n}$ as a module over $A$.
View $End(A^{\otimes n})$ as the set of morphisms in  a category $Mod^{n}_{k}$  with  $A^{\otimes n}$ as the single object.  We single out the following morphisms in this category:
\begin{enumerate}
\item $\alpha_{1} \otimes \alpha_{2} \otimes \cdots \otimes \alpha_{n} \in A^{\otimes n}$ acting
on $A^{\otimes n}$ by left multiplication,
\item the elements of the symmetric group $S_{n}$, generated by transpositions of adjacent
tensor factors.
\end{enumerate}
In making the representation of $VB_{n}$ we have used the
stringy generators $\mu_{ij}$ and mapped them to sums of morphisms of the first type above.
The virtual braid group $VB_{n}$ described via (\ref{stringypres}),  can be viewed as a category with one
object and generators $\mu_{ij}$ and $v_{k}.$ We let  $Mod_{k}$ denote the category that is obtained
by taking all of the categories $Mod^{n}_{k}$ together with objects $A^{\otimes n}$ for each natural number $n$ and morphisms from all of the $End(A^{\otimes n}).$

\begin{rem} \rm
Of course any associative algebra can be seen as
a single object category with morphisms the elements of the algebra. But here we have a pictorial
representation of the morphisms as stringy braid diagrams. These diagrams, which capture the pure virtual braid group so far,  can be generalized
by taking the transpositions of the form $P_{i, i+1}$  via a diagram of lines
$i$ and $i+1$ crossing through one another to form virtual crossings $v_{i}.$
Seen from the categorical view that we have developed in these last sections, the virtual crossings are interpreted as generators of the symmetric group whose action is added naturally to the algebraic structure of the pure virtual braid group. By bringing in this action, we expand the pure virtual braid group to the virtual braid group. The virtual crossings  have thus become part of the embedded symmetry of the structure of the virtual braid group. This is in sharp contrast to the role of the virtual crossings in the original form of the virtual knot theory. There the virtual crossings appear
as artifacts of the presentation of virtual knots in the plane where those knots acquire extra crossings
that are not really part of the essential structure of the virtual knot. Nevertheless, these same crossings
appear crucially in the virtual braid group, and turn into the generators of the symmetric group embedded in the virtual braid group. With the use of the full set of $\mu_{ij}$ in (\ref{stringypres}) the detour moves and other remnants of the virtual crossings as artifacts have completely disappeared into the permutation action. We will continue the categorical discussion for the virtual braid group, after first discussing certain aspects of knot theory and the tangle categories.
\end{rem}

We can now state a general representation theorem.
\smallbreak

\begin{thm} Any monoidal functor
$$F: SC \longrightarrow Mod_{k}$$
gives rise to a representation of $VB_{n}:$
$$f \in End_{SC}([n]) \simeq VB_{n} \longmapsto F(f) \in End_{k}(A^{\otimes n})$$
where $A = F(*).$
\end{thm}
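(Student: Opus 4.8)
The plan is to treat the statement as a repackaging of functoriality together with the identification supplied by Theorem~3; there is no deep obstacle, only a units check to guarantee that we obtain a group representation rather than merely a homomorphism of monoids. First I would set $A := F(*)$ and record that, since both $SC$ and $Mod_{k}$ are monoidal and $[n] = *^{\otimes n}$, the functor $F$ sends $[n]$ to $F(*)^{\otimes n} = A^{\otimes n}$ (via the monoidal coherence isomorphisms, which are identities in the strict case). Because $F$ preserves composition and identity morphisms, its restriction to the endomorphism monoid of $[n]$ is a monoid homomorphism
\[
F\colon End_{SC}([n]) \longrightarrow End_{k}(A^{\otimes n}).
\]

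Next I would invoke Theorem~3, which gives a group isomorphism $End_{SC}([n]) \simeq VB_{n}$. Composing this isomorphism with the monoid homomorphism above produces exactly the assignment $f \mapsto F(f)$ of the statement, now viewed as a map $VB_{n} \to End_{k}(A^{\otimes n})$. The one point needing care is that the image should consist of invertible maps, so that the assignment is a genuine representation landing in the group of units $Aut(A^{\otimes n}) \subseteq End_{k}(A^{\otimes n})$. For this I would use the elementary fact that every functor carries isomorphisms to isomorphisms: since $End_{SC}([n]) \simeq VB_{n}$ is a group under composition, each morphism $f\colon [n] \to [n]$ occurring here has a two-sided compositional inverse and is therefore an isomorphism of $[n]$, whence $F(f)$ is an isomorphism of $A^{\otimes n}$. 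Concretely, invertibility is already forced on generators: the defining relations $\mu\mu' = id_{*\otimes *} = \mu'\mu$ and $vv = id_{*}$ of $SC$ make $F(\mu)$ and $F(\mu')$ mutually inverse and $F(v)$ an involution, so the images of all generators, and hence of all of $VB_{n}$, lie in $Aut(A^{\otimes n})$.

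Finally I would observe that this statement abstracts and recovers the earlier concrete constructions. The particular monoidal functor determined on generators by $\mu \mapsto \tilde{\rho}_{12}$, $\mu' \mapsto \tilde{\rho}_{21}$ and $v \mapsto P_{12}$, where $\rho$ is an invertible algebraic Yang-Baxter element, is exactly the data whose induced map is the representation $Rep$ of Proposition~3; one checks it respects the relations of $SC$ precisely as in that proof. Thus Theorem~4 asserts that \emph{every} monoidal functor out of $SC$ yields such a representation, with the Yang-Baxter constructions as special cases. I expect no genuinely hard step: compatibility of $F$ with the isomorphism of Theorem~3 is automatic once $F$ is given as a functor, and the only substantive observation is the units argument of the previous paragraph, which promotes the monoid homomorphism into an honest representation of the group $VB_{n}$.
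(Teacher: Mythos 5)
Your proposal is correct and matches the paper's intended argument: the paper's own proof is simply ``follows from the previous discussion,'' meaning exactly the routine chain you spell out --- monoidality gives $F([n]) = A^{\otimes n}$, functoriality gives a monoid homomorphism on $End_{SC}([n])$, and Theorem~3 identifies this monoid with the group $VB_{n}$, so invertibility of images is automatic. Your explicit units check (functors preserve isomorphisms, and the relations $\mu\mu' = id$, $vv = id$ force invertibility on generators) is a worthwhile filling-in of details the paper leaves implicit, not a departure from its approach.
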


\begin{proof} The proof follows from the previous discussion.
\end{proof}

The representations of $VB_{n}$ that we have here derived can be interpreted as follows.

\begin{thm} Let $\rho \in A \otimes A$ be a solution of the algebraic Yang-Baxter equation, where
$A$ is an algebra over a commutative ring $k.$ One can then define a monoidal functor
$$F_{A}:SC \longrightarrow Mod_{k}$$ by setting $F_{A}(*) = A$, $F_{A}(\mu) = \tilde{\rho}$,
and $F_{A}(v_{i}) = P$, where the endomorphisms $\tilde{\rho}$ and $P$ of $A \otimes A$ are
given by $$\tilde{\rho}(x \otimes y) = \rho(x \otimes y)$$ and $$P(x \otimes y) = y \otimes x$$
for all $x, y \in A.$
\end{thm}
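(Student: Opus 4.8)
The plan is to invoke the universal property of the freely generated strict monoidal category. By Definition \ref{sc}, $SC$ is the quotient of the strict monoidal category freely generated by the single object $*$ and the three morphisms $\mu, \mu', v$ by the relations (1)--(7). A strict monoidal functor out of such a free category is determined uniquely by its values on the generating object and generating morphisms, is automatically monoidal by construction, and descends to the quotient exactly when the images of the generating morphisms satisfy the listed relations. Thus the whole proof reduces to (a) specifying $F_A$ on generators and (b) verifying relations (1)--(7) for those images.

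For (a) I set $F_A(*) = A$, which forces $F_A([n]) = F_A(*^{\otimes n}) = A^{\otimes n}$ by strictness, and $F_A(\mu) = \tilde{\rho}$, $F_A(v) = P$, so that $F_A(v_i) = P_{i,i+1} = P_i$ and $F_A(\mu_{12}) = \tilde{\rho}_{12}$. Relation (1) forces $F_A(\mu')$ to be a two-sided inverse of $\tilde{\rho}$, so I set $F_A(\mu') = \tilde{\rho}^{-1}$; this is legitimate because $\rho$ is an invertible solution of the algebraic Yang-Baxter equation (invertibility being the standing hypothesis already needed for $\tilde{\rho} \in Aut(A\otimes A)$ in Proposition 3), so $\tilde{\rho}$ is an isomorphism in $Mod_{k}$. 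The images of the general connecting strings are then determined by formulas \ref{catmuij} and \ref{catmuji}. Here one checks the bookkeeping $F_A(\mu_{ij}) = \tilde{\rho}_{ij}$: since each $P_i$ is an algebra automorphism of $A^{\otimes n}$, conjugating the left-multiplication operator $\tilde{\rho}_{i,i+1}$ by a product of $P$'s yields left multiplication by the correspondingly permuted tensor, which is precisely $\rho_{ij}$ with the two active slots in positions $i$ and $j$.

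For (b) I check the relations one by one. Relations (2), (6), (7) are the symmetric-group relations, and their images $P_i$ are the standard transposition operators on tensor factors, which give a representation of $S_n$; hence these hold. Relation (1) holds by the choice $F_A(\mu') = \tilde{\rho}^{-1}$. Relation (5) is a disjoint-support commutation of left multiplications: $\tilde{\rho}_{12}$ and $\tilde{\rho}_{34}$ are left multiplications by elements supported on disjoint tensor factors and therefore commute, while relation (3) holds because $P_j$ for $j > 2$ fixes the factors $1$ and $2$ and so commutes with $\tilde{\rho}_{12}$. The substantive relation is (4), which in the $\mu_{ij}$ notation reads $\mu_{12}\mu_{13}\mu_{23} = \mu_{23}\mu_{13}\mu_{12}$: applying $F_A$ and using that the composite of left-multiplication operators is left multiplication by the product in the same order, the two sides become left multiplication by $\rho_{12}\rho_{13}\rho_{23}$ and by $\rho_{23}\rho_{13}\rho_{12}$, which coincide by Equation \ref{ybe}.

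The only substantive obstacle is relation (4) together with the index bookkeeping of (a): one must confirm that the purely formal conjugation defining $\mu_{ij}$ inside $SC$ is matched, under $F_A$, by the concrete conjugation of left-multiplication operators by the swaps $P_i$, so that the abstract Yang-Baxter identity transports to the operator identity \ref{ybe} for the $\rho_{ij}$. Once this correspondence is secured, every relation of $SC$ is satisfied by the images, so $F_A$ descends to a well-defined strict monoidal functor $SC \longrightarrow Mod_{k}$. This is, in effect, the categorical repackaging of Proposition 3.
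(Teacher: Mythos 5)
Your proposal is correct and follows essentially the same route as the paper, whose proof is simply a deferral to the preceding discussion (the verifications in Propositions 3 and 4 of the Yang--Baxter identity for the $\rho_{ij}$, the $S_n$-action by the $P_i$, and the conjugation bookkeeping $\tau\tilde{\rho}_{ij}\tau^{-1}=\tilde{\rho}_{\tau(i),\tau(j)}$), which you have spelled out via the universal property of the freely generated strict monoidal category and a relation-by-relation check of Definition~\ref{sc}. Your explicit observations that $F_A(\mu')$ is forced to be $\tilde{\rho}^{-1}$ and that this requires the invertibility of $\rho$ (implicit in the theorem's statement but a standing hypothesis in the paper's preceding propositions) make the argument if anything more complete than the paper's own writeup.
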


\begin{proof} The proof follows from the previous discussion.
\end{proof}

\subsection {Virtual Hecke Algebra}

>From the point of view of the theory of braids the Hecke algebra $H_{n}(q)$ is a quotient of the
group ring ${\Bbb Z}[q,q^{-1}][B_{n}]$ of the Artin braid group by the ideal generated by the quadratic expressions
\begin{equation} \label{HeckeQuad}
\sigma_{i}^2 - z\sigma_{i} - 1
\end{equation}
for $i = 1, 2, \ldots n-1,$ where $z = q - q^{-1}.$
This corresponds to the identity $\sigma_{i} - \sigma_{i}^{-1} = z1$,
which is sometimes regarded diagrammatically as a  skein identity for calculating knot polynomials.
By the same token, we define the {\it virtual Hecke algebra} $VH_{n}(q)$ to be the quotient of the group ring
${\Bbb Z}[q,q^{-1}][VB_{n}]$ by the ideal generated by Eqs.~\ref{HeckeQuad}.
\smallbreak

There are difficulties in extending structure theorems for the Hecke algebra to corresponding structure
theorems for the virtual Hecke algebra, such as finding normal forms, studying the representation theory
and constructing Markov traces.
Yet, some matters of representations do generalize directly.
In particular, let $W$ be a module over ${\Bbb Z}[q, q^{-1}]$
and let $I:W \longrightarrow W$ be the identity operator.
If $R:W \otimes W \longrightarrow W \otimes W$ is a solution to the Yang-Baxter equation satisfying
$$R^{2} = zR + I,$$ then one has a corresponding  representation
$T: VH_{n}(q) \longrightarrow Aut(W^{\otimes n})$. This representation is specified as follows.
\begin{equation}
T(\sigma_{i} ) = \sum 1 \otimes \cdots \otimes 1 \otimes  R \otimes 1 \otimes \cdots \otimes 1
\end{equation}
where $R$ operates  on the $i$-th and $(i+1)$-st tensor factors, and
\begin{equation}
T(v_{i})  = \sum 1 \otimes \cdots \otimes 1 \otimes  P \otimes 1 \otimes \cdots \otimes 1
\end{equation}
where $P$ acts by permuting  the $i$-th and $(i+1)$-st tensor factors. It is easy to see that this gives a representation of the virtual Hecke algebra.
\smallbreak

One can hope that the presence  of such representations would shed light on the existence of
a generalization of the Ocneanu trace \cite{Jones} on the Hecke algebra to a corresponding trace
and link invariant using the virtual Hecke algebra. At this point there is an issue about the nature of the
generalization. One can aim for a trace on the virtual Hecke algebra that is compatible with the
Markov Theorem for virtual knots and links as formulated in \cite{Kamada,KL4}. This means that the trace must be compatible with both classical and virtual stabilization. This is a trace that
is difficult to achieve. A simpler trace is possible by working in rotational virtual knot theory where virtual stabilization is not allowed \cite{VKT}. See the next section for a discussion of unoriented quantum invariants for rotational virtuals.
We will report on the relation of this approach with the Markov Theorem for virtual knots and links in a separate paper.
\smallbreak

Another line of investigation is suggested by translating the basic Hecke algebra relation into the language of stringy connections. We have $\sigma = \mu v$ for the abstract relation between a braiding generator, a connector and a virtual element. Thus, the virtual Hecke relation
$\sigma^{2} = z \sigma + 1$
becomes $$\mu v \mu = z \mu + v,$$ and it is possible to work in the presentation (20) of the virtual
braid group to find a structure theory for the virtual Hecke algebra.

\section {Rotational Virtual Links, Quantum Algebras, Hopf algebras and the Tangle Category}

This section will show how the ideas and methods of this paper fit together with representations
of quantum algebras (to be defined below) and Hopf algebras and invariants of virtual links.
We begin with a quick review of the theory of virtual links (in relation to virtual braids), and we construct
the virtual tangle category. This category is a natural generalization of the virtual braid group. A functor from the virtual tangle category to an algebraic category will form a generalization of the representations of virtual braid groups that we have discussed in the previous section. This functor
is related to (rotational) invariants of virtual knots and links. It is not hard to see that the construction given in this section defines a category (for arbitrary Hopf algebras) that generalizes the String Category given earlier in this paper. The category that we define here contains virtual crossings, special elements that satisfy the algebraic Yang-Baxter equation and also cup and cap operators. The subcategory
without the cup and cap operators and without any (symbolic) algebra elements except those involved with the algebraic Yang-Baxter operators is isomorphic to the String Category.
\smallbreak

A word to the reader about this section: In one sense this section is a review of known material in the form that Kauffman and Radford \cite{KRH} have shaped the theory of quantum invariants of knots and three-manifolds via finite-dimensional Hopf algebras. On the other hand, this theory is  generalized here to invariants of rotational virtual knots and links. This generalization is new, and it is directly related to the structure of the virtual braid group as described in the earlier part of this paper. We have given a complete sketch of this generalization.
The reader should take the word {\it sketch} seriously and concentrate on the sequence of diagrams that depict the ingredients of the theory. Taking this point of view, the reader can see that the appearance of the algebraic Yang-Baxter element in our diagrams (See Figure~\ref{ApplyFunctor}) is aided by using a connecting string exactly analogous to the connecting string in the earlier part of the paper. The generalization follows by taking the functorial image of the virtual tangle category defined in this section.
\smallbreak

\subsection{Virtual Diagrams}
We begin with Figure~\ref{vmoves}. This figure illustrates the moves on virtual knot and link
diagrams that serve to define the theory of virtual knots and links. Two knot or link diagrams
with virtual and classical crossings are said to be {\it virtually isotopic} if one can be obtained from
the other by a finite sequence of these moves. In the figure the moves are divided into
type A, B and C moves. Moves of type A are the classical Reidemeister moves. These
are essentially the same as corresponding moves in the Artin braid group except for the boxed
move involving a loop in the diagram. The move involving this loop is usually called the
{\it first Reidemeister move.} When we forbid the first Reidemeister move, the equivalence relation is
called {\it regular isotopy}. The moves of type B are purely virtual and (except for the move involving
a virtual loop)  correspond to the properties of virtual crossings in the virtual braid group. We call the
equivalence relation that forbids both the virtual loop move and the classical loop move
{\it virtual regular isotopy}. Finally, we have moves of type C. These are the local detour moves, and
they correspond to the mixed moves in the virtual braid group.
\smallbreak

\begin{figure}
     \begin{center}
     \begin{tabular}{c}
     \includegraphics[width=10cm]{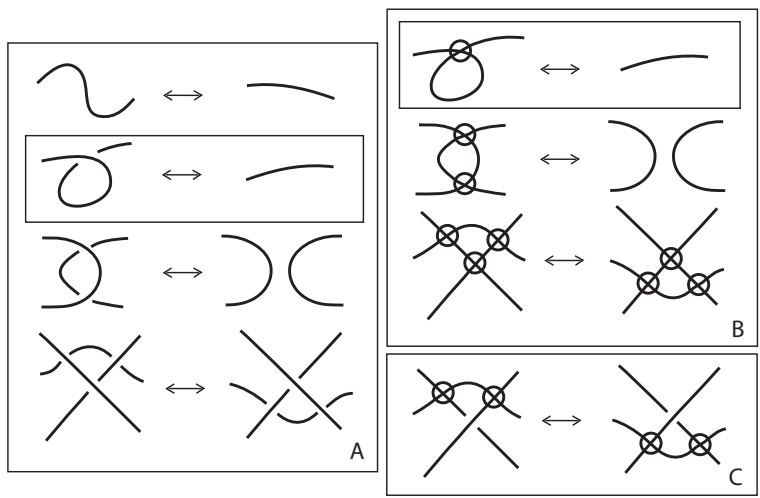}
     \end{tabular}
     \caption{Virtual moves}
     \label{vmoves}
\end{center}
\end{figure}

In this section we will work with virtual knots and links up to virtual regular  isotopy. In addition to the
usual kinds of virtual phenomena, we will see some extra features in looking at this equivalence relation.
 Two virtual knot or link diagrams are said to be {\it rotationally equivalent} if they are equivalent under virtual regular isotopy.  {\it Rotational virtual knot theory}
is the study of the rotational equivalence classes of virtual knot and link diagrams. Studied under this equivalence relation, virtual knot and link diagrams are called {\it rotational virtuals.} We shall say that a virtual knot or link is {\it rotationally knotted} or {\it rotationally linked} if it is not equivalent to an unknot or an unlink under virtual regular isotopy.
View Figure~\ref{rotateknot} and Figure~\ref{rotatelink}. In the first figure we illustrate a rotational
virtual knot, and in the second we show a rotational virtual link. Both the knot and the link are kept from
being trivial by the presence of flat loops as discussed above. There is much more to say about rotational virtuals, and we refer the reader to \cite{VKT} for some steps in this direction.
\smallbreak

\begin{figure}
     \begin{center}
     \begin{tabular}{c}
     \includegraphics[width=2.5cm]{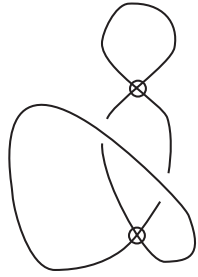}
     \end{tabular}
     \caption{A rotational virtual knot}
     \label{rotateknot}
\end{center}
\end{figure}

\begin{figure}
     \begin{center}
     \begin{tabular}{c}
     \includegraphics[width=3.3cm]{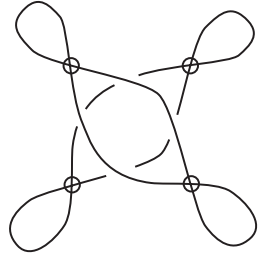}
     \end{tabular}
     \caption{A rotational virtual link}
     \label{rotatelink}
\end{center}
\end{figure}

\subsection{The Virtual Tangle Category}
The advantage in studying virtual knots up to virtual regular isotopy is that all so-called quantum link
invariants generalize to invariants of virtual regular isotopy. This means that virtual regular isotopy is
a natural equivalence relation for studying topology associated with solutions to the Yang-Baxter
equation.
\smallbreak

Here we create a context  by defining the {\it Virtual Tangle Category}, $VTC,$ as indicated
in Figure~\ref{regtang}.
The tangle category is generated by the morphisms
shown in the box at the top of this figure. These generators are: a single identity line, right-handed and left-handed
crossings, a cap and a cup, a virtual crossing. The objects in the tangle category consist in the set of
$[n]$'s  where $n = 0, 1,2, \ldots.$ For a morphism $[n] \longrightarrow [m]$, the numbers $n$ and $m$ denote, respectively, the number of free arcs at the bottom and at the top of the diagram that represents  the morphism. The morphisms are like braids except that they can (due to the presence of the cups and caps) have different numbers of free ends at the top and the bottom of their diagrams.
\smallbreak

The sense in which the elementary morphisms (line, cup, cap, crossings)  generate the tangle category is composition as  shown in Figure~\ref{tangprod}. For composition, the segments are matched so that the number of lower free ends on each segment is equal to the number of upper free ends on the segment below it.
The  Figure~\ref{tangprod} shows a virtual trefoil as a morphism from
$[0]$ to $[0]$ in the category.
The tensor product of morphisms is the horizontal juxtaposition of their
diagrams. Each of the seven horizontal segments of the figure represents one of the elementary morphisms tensored  with the identity line. Consequently there is a well-defined composition of all of the segments and this composition is a morphism $[0] \longrightarrow [0]$ that represents the knot.
\smallbreak

The basic equivalences of morphisms are
shown in Figure~\ref{regtang}. Note that $II, III, V$ are formally equivalent to the rules for unoriented
virtual braids. The zero-th move is a cancellation of consecutive maxima and minima, and the move $IV$
is a swing move in both virtual and classical relations of crossings to maxima and minima. It should be
clear that the tangle category is a generalization of the virtual braid group with a natural inclusion of unoriented virtual braids as special tangles in the category. Standard braid closure and the plat closure of braids
have natural definitions as tangle operations. Any virtual knot or link can be
represented in the tangle category as a morphism from $[0]$ to $[0],$ and one can prove that {\it two virtual links are
virtually regularly isotopic if and only if their tangle representatives are equivalent in the tangle category.}
None of the rules for equivalence in the tangle category involve either a classical loop or a
virtual loop. This means that the virtual tangle category is a natural home for the theory of rotational
virtual knots and links.
\smallbreak

\begin{figure}
     \begin{center}
     \begin{tabular}{c}
     \includegraphics[width=8cm]{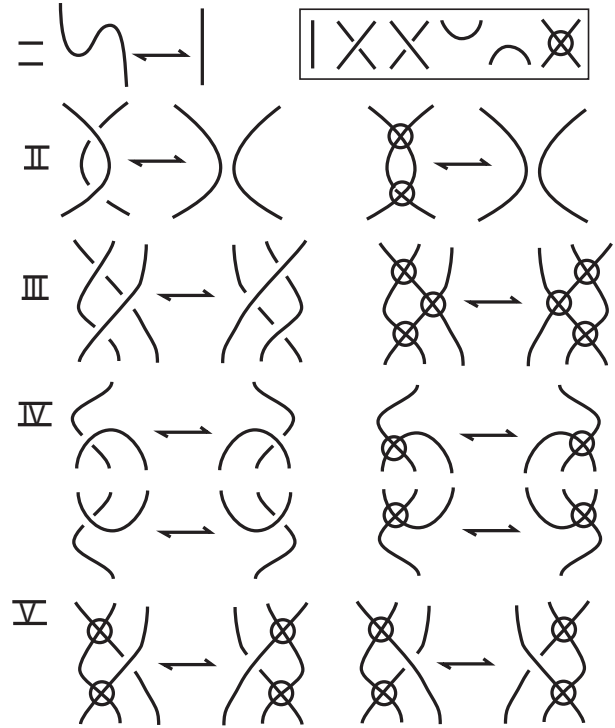}
     \end{tabular}
     \caption{Regular isotopy with respect to the vertical direction}
     \label{regtang}
\end{center}
\end{figure}

\begin{figure}
     \begin{center}
     \begin{tabular}{c}
     \includegraphics[width=6cm]{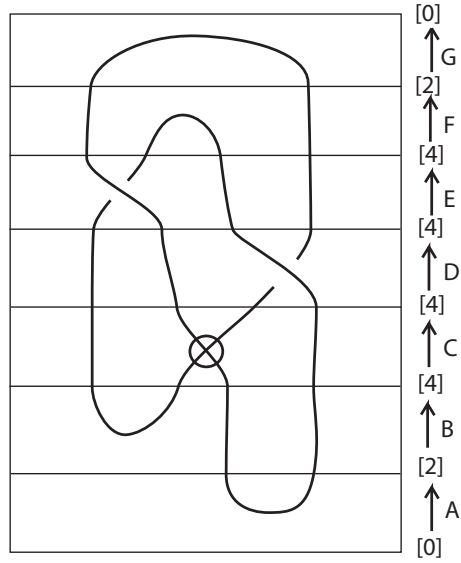}
     \end{tabular}
     \caption{Virtual trefoil as a morphism in the tangle category}
     \label{tangprod}
\end{center}
\end{figure}

\subsection{Quantum Algebra and Category}
Now we shift to a category associated with an  algebra that is directly related to our representations
of the virtual braid group. We take the following definition \cite{KP, KRH}: A {\it quantum algebra} $A$
is an algebra over a commutative ground ring $k$ with an invertible  mapping $s: A \longrightarrow A$
that is an {\it antipode}, that is $s(ab) = s(b)s(a)$ for all $a$ and $b$ in $A,$ and there is an element
$\rho \in A \otimes A$ satisfying the algebraic Yang-Baxter equation as in Equation~\ref{ybe}:
$$\rho_{12} \rho_{13} \rho_{23} = \rho_{23} \rho_{13} \rho_{12}.$$
We further assume that $\rho$ is invertible and that
$$ \rho^{-1} = (1_{A} \otimes s)\circ \rho = (s \otimes 1_{A})\circ \rho.$$
The multiplication in the algebra is usually denoted by $m: A\otimes A \longrightarrow A$ and is
assumed to be associative. It is also assumed that the algebra has a multiplicative unit element.
The defining properties of a quantum algebra are part of the properties of a Hopf algebra, but a Hopf algebra has a comultiplication $\Delta: A \longrightarrow A \otimes A$
that is a homomorphism of algebras, plus a list of further relations, including a fundamental relationship between the multiplication, the comultiplication and the antipode. In the interests of simplicity, we shall restrict ourselves to quantum algebras here, but most of the remarks that follow apply to Hopf algebras, and particularly quasi-triangular Hopf algebras. Information on Hopf algebras is included at the end of this section.  See \cite{KRH} for more about these connections.
\smallbreak

We construct a category $Cat(A)$ associated  with a quantum algebra $A$. This category is a very close relative to the virtual tangle category. $Cat(A)$ differs from the tangle category in that it has only virtual crossings, and
there are labeled vertical lines that carry elements of the algebra $A.$  See Figure~\ref{CatMorph}.
Each such labeled line is a morphism in the category.  The virtual crossing is a generating morphism as are the cups, caps and labeled lines. The objects in this category are the same
entities $[n]$ as in the tangle category. This category is identical in its framework to the tangle category but the crossings are not
present and lines labeled with algebra are present.
Given $a,b \in A$ we compose the morphisms corresponding to $a$ and $b$ by taking a line labeled
$ab$ to be their composition. In other words, if $\langle x \rangle$ denotes the morphism in
$Cat(A)$ associated with $x \in A$, then
$$\langle a \rangle \circ \langle b \rangle = \langle ab \rangle.$$
As for the additive structure in the algebra, we extend the category to an additive category by formally adding the generating morphisms (virtual crossings, cups, caps and algebra line segments). In Figure~\ref{CatMorph} we illustrate the composition of
such morphisms and we illustrate a number of other defining features of the category $Cat(A).$
\smallbreak

\begin{figure}
     \begin{center}
     \begin{tabular}{c}
     \includegraphics[width=8cm]{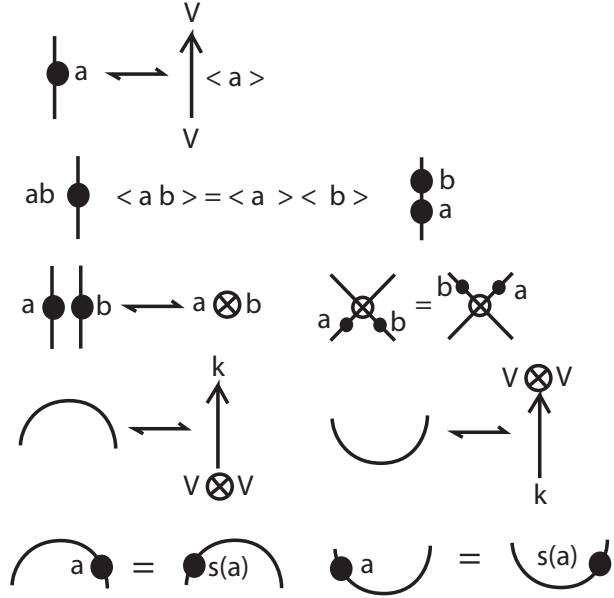}
     \end{tabular}
     \caption{ Morphisms in $Cat(A)$}
     \label{CatMorph}
\end{center}
\end{figure}

In the same figure we illustrate how the tensor product of elements $a \otimes b$ is represented by parallel vertical
lines with $a$ labeling the left line and $b$ labeling the right line. We indicate that the virtual
crossing acts as a permutation in relation to the tensor product of algebra morphisms. That is, we
illustrate that $$\langle a \rangle \otimes \langle b \rangle \circ P = P \circ \langle b \rangle \otimes \langle a \rangle.$$   Here $P$ denotes the virtual crossing of two segments, and is regarded as a morphism $P: V \otimes V \longrightarrow  V \otimes V$ (see remark below). Since the lines interchange, we expect $P$ to behave as the permutation of the two tensor factors.
\smallbreak

In Figure~\ref{CatMorph} we show the notation
$V$ for the object $[1]$ in this category and we use $V \otimes V = [2]$, $V \otimes V \otimes V= [3]$
and so on for all the natural number objects in the category. We write
$[0] = k$, identifying the ground ring with the ``empty object" $[0].$ It is then axiomatic that
$k \otimes V = V \otimes k = V.$ Morphisms are indicated both diagrammatically and in terms of arrows
and objects in this figure. Finally, the figure indicates the arrow and object forms of the cup and the
cap, and crucial axioms relating the antipode with the cup and the cap.
A cap is regarded as a morphism from $V \otimes V$ to $k$, while
a cup is regarded as a morphism form $k$ to $V \otimes V.$
The basic property of the cup and the cap is the  {\em Antipode Property: if
one ``slides" a decoration across the maximum or minimum in a counterclockwise
turn, then the antipode $s$ of the algebra is applied to the decoration.}
In categorical terms this property says $$Cap \circ (\langle 1 \rangle \otimes a) = Cap \circ
(\langle sa \rangle \otimes 1 )$$ and $$(\langle a \rangle \otimes 1) \circ Cup  =  (1 \otimes \langle sa \rangle ) \circ Cup.$$  Here $1$ denotes the identity morphism for $[0]$. These properties and other naturality properties of the cups and the
caps are illustrated in Figure~\ref{CatMorph} and Figure~\ref{antipode}.  The naturality properties of the flat diagrams  in this category include regular homotopy of immersions (for diagrams without algebra decorations), as illustrated in these figures.
\smallbreak

In Figure~\ref{antipode}  we see how the antipode property of the cups and caps leads to a
diagrammatic interpretation of the antipode. In the figure we see that the antipode $s(a)$ is represented by composing with a cap and a cup on either side of the morphism for $a$. In terms of the composition
of morphisms this diagram becomes
$$\langle sa \rangle = (Cap \otimes 1)  \circ (1 \otimes \langle a \rangle \otimes 1)\circ(1 \otimes  Cup).$$
Similarly, we have
$$\langle s^{-1}a \rangle = (1 \otimes Cap)  \circ(1 \otimes \langle a \rangle \otimes 1)\circ( Cup \otimes 1).$$
This, in turn, leads to the
interpretation of the flat curl as an  element $G$ in $A$ such that
$s^{2}(a) = GaG^{-1}$ for all $a$ in $A.$  $G$ is a flat curl diagram interpreted
as a morphism in the category. We see that, formally, it is natural to interpret
$G$ as an element of $A$.   In a so-called {\em ribbon Hopf
algebra} there is such an element  already in the algebra. In the general case it
is natural to extend the algebra to contain such an element.
\smallbreak

\begin{figure}
     \begin{center}
     \begin{tabular}{c}
     \includegraphics[width=8cm]{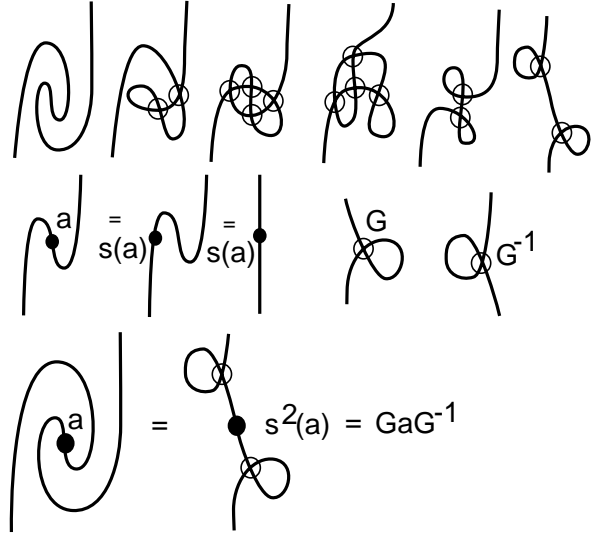}
     \end{tabular}
     \caption{Diagrammatics of the antipode}
     \label{antipode}
\end{center}
\end{figure}

\begin{figure}
     \begin{center}
     \begin{tabular}{c}
     \includegraphics[width=8cm]{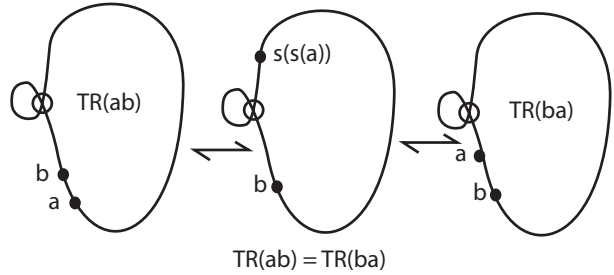}
     \end{tabular}
     \caption{Formal trace}
     \label{trace}
\end{center}
\end{figure}

\subsection{The Basic Functor and the Rotational Trace}
We are now in a position to describe a functor $F$ from the  virtual tangle category $VTC$
to $Cat(A).$  (Recall that the virtual tangle category is defined for virtual link diagrams without
decorations. It has the same objects as $Cat(A).$)
$$F:VTC \longrightarrow Cat(A)$$
The functor $F$ decorates each
positive  crossing of the tangle (with respect to the vertical - see Figure~\ref{Functor})
with the Yang-Baxter element (given by the quantum algebra $A$)
$\rho = \Sigma e \otimes e^{'}$ and each negative crossing (with respect to the
vertical) with $\rho^{-1} = \Sigma s(e) \otimes e^{'}$. The form of the
decoration is indicated in Figure~\ref{Functor}. Since we have labelled the negative crossing with the
inverse Yang-Baxter element, it follows at once that the two crossings are mapped to inverse elements in the category of the algebra.  {\it This association is a direct generalization of our mapping
of the virtual braid group to the stringy connector presentation.}
\smallbreak

We now point out the structure of the image of a knot, link or tangle under this functor.
The key point about this functor is that, because quantum algebra elements can be
moved around the diagram, we can concentrate all the image algebra in one place.
Because the flat curls are identified with either $G$ or $G^{-1}$, we can use
regular homotopy of  immersions to bring the image under $F$ of each component of a virtual link diagram to the form of a circle with a single concentrated decoration (involving a sum over many
products) and a reduced pattern of flat curls that can be encoded as a power of the special
element $G.$ Once the underlying curve of a link  component is converted to a loop with
total turn zero, as in Figure~\ref{trace}, then we can think of such a loop, with algebra labeling the
loop, as a representative for a formal trace of that algebra and call it $TR(X)$ as in the figure.
In the figure we illustrate that for such a labeling $$TR(ab) = TR(ba),$$ thus one can take a product of
algebra elements on a zero-rotation loop up to cyclic order of the product. In situations where we
choose a representation of the algebra or in the case of finite dimensional Hopf algebras where one
can use right integrals \cite{KRH}, there are ways to make actual evaluations of such traces. Here we
use them formally to indicate the result of concentrating the algebra on the loop.
\smallbreak

\begin{figure}
     \begin{center}
     \begin{tabular}{c}
     \includegraphics[width=8cm]{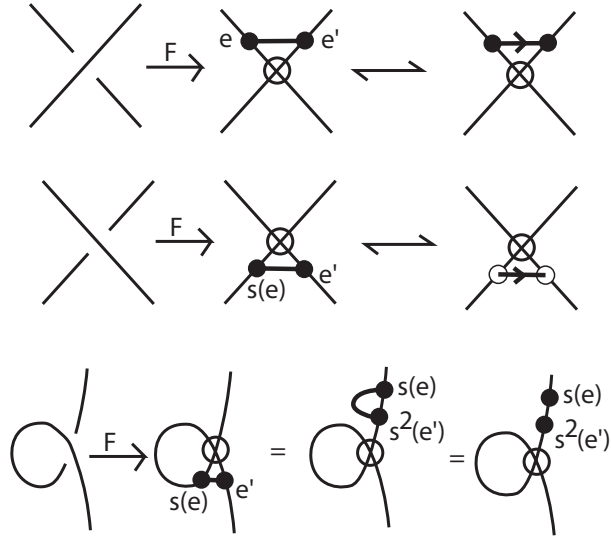}
     \end{tabular}
     \caption{The functor $F: VTC \longrightarrow Cat(A)$}
     \label{Functor}
\end{center}
\end{figure}

\begin{figure}
     \begin{center}
     \begin{tabular}{c}
     \includegraphics[width=8cm]{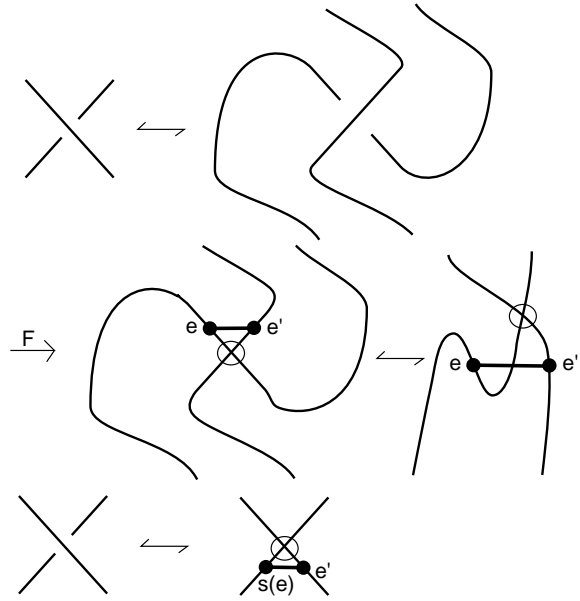}
     \end{tabular}
     \caption{Inverse and antipode}
     \label{twist}
\end{center}
\end{figure}

One further comment is in order about the antipode. In Figure~\ref{twist} we show that our axiomatic assumption about the antipode (the sliding rule around maxima and minima) actually demands that
the inverse of $\rho$ is $(s \otimes 1_{A})\circ \rho = (1_{A} \otimes s )\circ \rho$. This follows by
examining the form of the inverse of the positive crossing in the tangle category by turning that crossing
to produce an identity between the positive crossing and the negative crossing twisted with additional
maxima and minima. This relationship shows that if we set the functor $F$ on a right-handed crossing as we have done, then the way it maps the inverse crossing is forced and that this inverse corresponds to the inverse of $\rho$ in the quantum algebra. Thus the quantum algebra formula for the inverse of
$\rho$ is forced by the topology.
\smallbreak

In Figure~\ref{ApplyFunctor} we illustrate the entire functorial process for  the virtual trefoil of Figure~\ref{tangprod}. The virtual trefoil is denoted by $K$, and we find that $F(K)$ reduces to a zero-rotation circle with the
inscription $ e' s(f) s^{2}(e) s^{3}(f') G^{2}. $ We can, therefore, write the equation
$$F(K) = TR[e' s(f) s^{2}(e) s^{3}(f') G^{2}].$$
Another way to think about this trace expression is to regard it as a Gauss code for the knot that has extra structure. {\it The chords in the Gauss diagram are the string connectors of the beginning of this paper, generalized to the algebra category $Cat(A).$} The powers of the antipode and the power of $G$ keep track of rotational features in the diagram as it lives in the tangle category up to regular isotopy.
We now see that the mapping of the virtual braid group to the braid group generated by permutations
and string connectors has been generalized to the functor $F$ taking the virtual tangle category
to the abstract category of a quantum algebra. We regard this generalization as an appropriate context for thinking about virtual knots, links and braids.
\smallbreak

The category $Cat(A)$ of a quantum algebra $A$ can be generalized to an abstract category
with labels, virtual crossings, and with stringy connections that satisfy the algebraic Yang-Baxter equation. Each such stringy connection has a left label $e$ or $s(e)$ and a right label $e'.$ We retain the formalism of the antipode
as a formal replacement for adjoining a label with a cup and a cap. The resulting {\it abstract algebra category} will be denoted by $\overline{Cat(A)}$. Since we take this category with no further relations,
the functor $ \overline{F}:VTC \longrightarrow \overline{Cat(A)}$ is an equivalence of categories. This functor is
the direct analog of our reformulation of the virtual braid group in terms of stringy connectors.

\begin{figure}
     \begin{center}
     \begin{tabular}{c}
     \includegraphics[width=9cm]{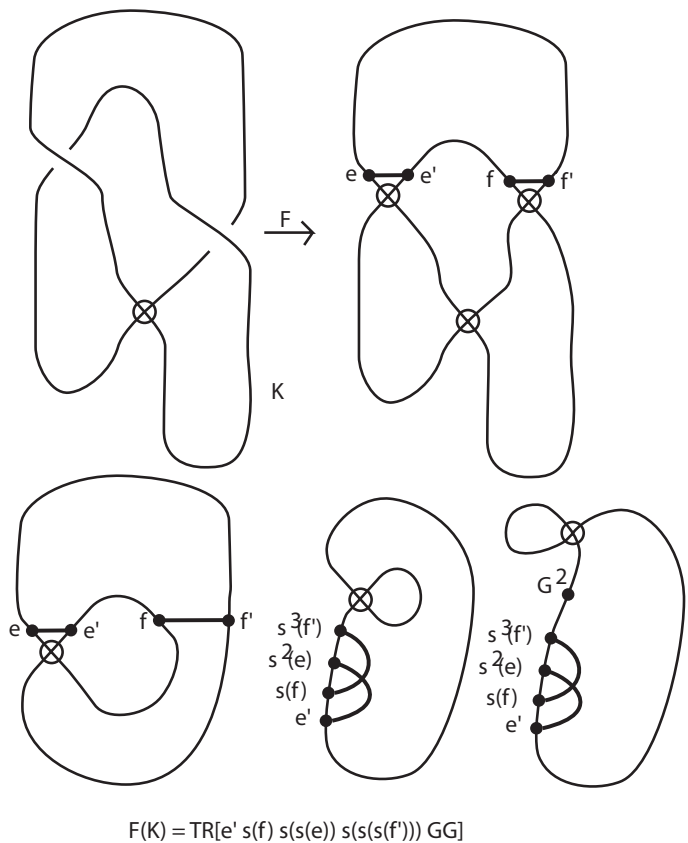}
     \end{tabular}
     \caption{The functor $F: T \longrightarrow Cat(A)$ applied to a virtual trefoil}
     \label{ApplyFunctor}
\end{center}
\end{figure}

\begin{figure}
     \begin{center}
     \begin{tabular}{c}
     \includegraphics[width=6cm]{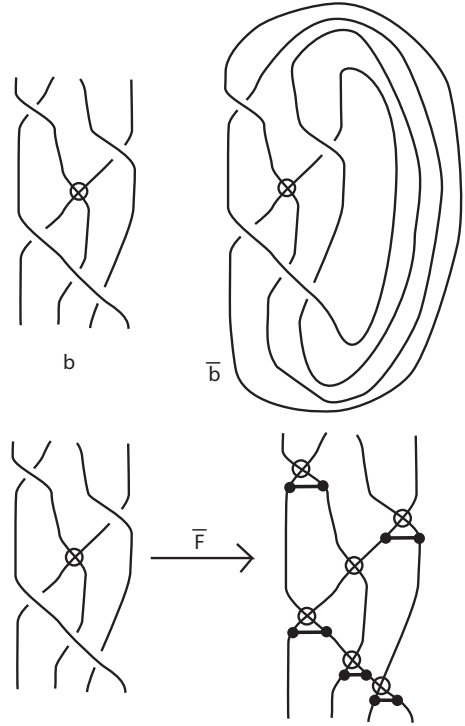}
     \end{tabular}
     \caption{Virtual braid and closure}
     \label{vbclosure}
\end{center}
\end{figure}

\subsection{Virtual Braids and Their Closures}
The functor $F:VTC \longrightarrow Cat(A)$ defined in the last subsection can be restricted to
the category of virtual (unoriented) braids that we will denote here by $VB.$ If the reader then examines
the result of this functor he will see that the image of a virtual crossing is a virtual crossing, and the image of a braid generator is a string connection (expressed in terms of $Cat(A).$). If we use the
corresponding functor
$$ \overline{F}:VB \hookrightarrow VTC \longrightarrow \overline{Cat(A)},$$ then the image
$ \overline{F}(VB)$ is an abstract category of
string connections and permutations that is (up to orientation) identical with our String Category $SC$
studied throughout this paper. This remark brings us full circle and shows how the String Category
fits in the context of the quantum link invariants discussed in this part of the paper. In particular,
view the bottom part of Figure~\ref{vbclosure} where we have illustrated the image under
$ \overline{F}$ of a particular virtual braid. Each classical crossing in the braid is replaced by a string connector followed by a virtual crossing. The string connector is interpreted as in the abstract Hopf algebra category, but in the braid image there is no other structure than the connectors and the virtual crossings. This shows how the braid lands in a subcategory that is isomorphic with our main category
$SC.$
\smallbreak

Now recall that one can move from virtual braids to virtual knots and links by taking the {\it braid closure.}
The closure $\overline{b}$ of a braid $b$ is obtained by attaching planar disjoint arcs from the outputs of a braid to its inputs as illustrated in Figure~\ref{vbclosure}. The result of the closure is a virtual knot or link. In particular, this means that we can express rotational quantum link invariants by applying
$F$ to the closure of virtual braid and then taking the trace $TR$ described in the last section.
alternatively, one can regard the invariant as $A$-valued where $A$ is the quantum algebra that supports the functor $F.$ Altogether, this subsection and the examples in Figure~\ref{vbclosure}
indicate the close relationship of the different constructions that have been outlined in this paper and
how the structure of the virtual braid group is intimately related to quantum link invariants for rotational
virtual links.

\subsection{Hopf Algebras and Kirby Calculus}
In Figure~\ref{kirby} we illustrate how one can use this concentration of algebra on the loop in the context of a Hopf algebra that has a right integral. The right integral is a function
$\lambda: A \longrightarrow k$ satisfying $$\lambda(x) 1_{A} = \Sigma \lambda(x_{1})x_{2}$$
where the coproduct in the Hopf algebra has formula $\Delta(x) = \Sigma x_{1} \otimes x_{2}$.
Here we point out how the use of the coproduct corresponds to doubling the lines in the diagram, and that if one were to associate the function
$\lambda$ with a circle with rotation number one,  then the resulting link evaluation will be invariant under the so-called Kirby move.
The Kirby move replaces two link components with new ones by doubling one component and connecting one of the components of the double with the other component. Under our functor from the virtual tangle category to the category for the Hopf algebra, a knot goes to a circle with algebra concentrated at $x.$ The doubling of the knot goes to concentric circles labeled with the coproduct
$\Delta(x) = \Sigma x_{1} \otimes x_{2}.$  Figure~\ref{kirby} shows how invariance under the handle-slide in the tangle category corresponds the integral equation
$$\lambda(x) y = \Sigma \lambda(x_{1})x_{2} y.$$
It turns out that
classical framed links $L$ have an associated compact oriented three manifold $M(L)$ and that
two links related by Kirby moves have homeomorphic three-manifolds. Thus the evaluation of links using the right integral yields invariants of three-manifolds. Generalizations to virtual three-manifolds are under investigation \cite{DK1}. We only sketch this point of view here, and refer the reader to \cite{KRH}.
\smallbreak

\begin{figure}
     \begin{center}
     \begin{tabular}{c}
     \includegraphics[width=8cm]{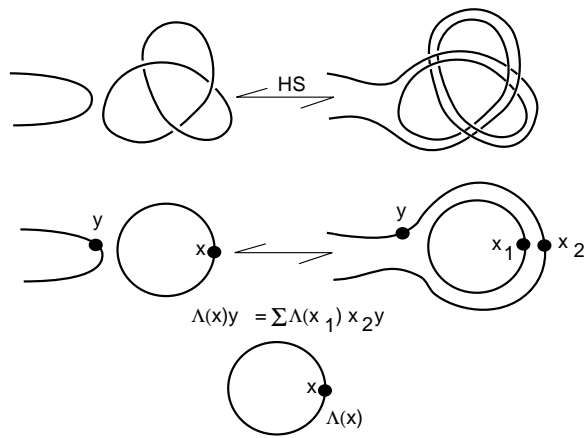}
     \end{tabular}
     \caption{The Kirby move}
     \label{kirby}
\end{center}
\end{figure}

\subsection {Hopf Algebra}

This section is added for reference about Hopf algebras. Quasitriangular Hopf algebras are an
important special case of the quantum algebras discussed in this section.
\smallbreak

Recall that a {\it Hopf algebra} \cite{Sweedler} is a bialgebra  $A$ over a commutative ring $k$ that has an associative multiplication $m:A \otimes A \longrightarrow A,$
and a coassociative comultiplication, and is equipped with a counit, a unit
and an antipode. The ring $k$ is usually taken to be a field.  The associative law for the multiplication  $m$ is expressed by the equation $$m(m \otimes1_{A}) = m(1_{A} \otimes m)$$ where $1_{A}$ denotes the identity map on A.
\smallbreak

The coproduct
$\Delta :A \longrightarrow A \otimes A$ is an algebra homomorphism and is
coassociative in the sense that
$$(\Delta \otimes 1_{A})\Delta = (1_{A} \otimes \Delta) \Delta.$$

The {\it unit} is a mapping from $k$ to $A$ taking $1_k$ in $k$ to $1_A$ in $A$ and, thereby, defining an action of $k$ on $A.$
It will be convenient to just identify the $1_k$ in $k$ and the $1_A$ in $A$, and to ignore
the name of the map that gives the unit.
\smallbreak

The counit is an algebra mapping from $A$ to $k$ denoted by $\epsilon :A \longrightarrow k.$ The following formula for the counit dualize the structure inherent in the unit:
$$(\epsilon \otimes 1_{A}) \Delta = 1_{A} = (1_{A} \otimes \epsilon) \Delta.$$

It is convenient to write formally
$$\Delta (x) = \sum x_{1} \otimes x_{2} \in A \otimes A$$
to indicate the decomposition of the coproduct of $x$ into a sum of first and second factors in the two-fold tensor product of $A$ with itself. We shall often drop the summation sign and write
$$\Delta (x) = x_{1} \otimes x_{2}.$$

The antipode is a mapping $s:A \longrightarrow A$ satisfying the equations
$$m(1_{A} \otimes s) \Delta (x) = \epsilon (x)1_{A} \quad \mbox{and} \quad m(s \otimes 1_{A}) \Delta (x)= \epsilon (x)1_{A}.$$
It is a consequence of this definition that $s(xy) = s(y)s(x)$ for all $x$ and $y$ in A. \vspace{3mm}

A {\it quasitriangular Hopf algebra} \cite{Drinfeld} is a Hopf algebra  $A$ with an element
$\rho \in A \otimes A$ satisfying
the following conditions:
\vspace{3mm}

\noindent
(1) $\rho \Delta = \Delta' \rho$ where $\Delta'$ is the composition
of $\Delta$ with the map on
$A \otimes A$ that switches the two factors. \vspace{3mm}

\noindent
(2) $$\rho_{13} \rho_{12} = (1_{A} \otimes \Delta) \rho,$$ $$\rho_{13} \rho_{23} = (\Delta \otimes 1_{A})\rho.$$

The symbol $\rho_{ij}$ denotes the placement of the first and second tensor factors of $\rho$ in the $i$ and $j$ places in a triple tensor product. For example, if $\rho = \sum e \otimes e'$ then $$\rho_{13} = \sum e \otimes 1_{A} \otimes e'.$$

Conditions (1) and (2) above imply that $\rho$ has an inverse and that

$$ \rho^{-1} = (1_{A} \otimes s^{-1}) \rho = (s \otimes 1_{A}) \rho.$$

It follows easily from the axioms of the quasitriangular Hopf algebra that $\rho$ satisfies the Yang-Baxter equation

$$\rho_{12} \rho_{13} \rho_{23} = \rho_{23} \rho_{13} \rho_{12}.$$

A less obvious fact about quasitriangular Hopf algebras is that there exists an element $u$ such that $u$ is invertible and $s^{2}(x) = uxu^{-1}$ for all $x$ in $A.$ In fact, we may take $u = \sum s(e')e$ where $\rho = \sum e \otimes e'.$ This result, originally due to Drinfeld \cite{Drinfeld}, follows from the diagrammatic categorical context of this paper. \vspace{3mm}

An element $G$ in a Hopf algebra is said to be {\em grouplike} if $\Delta (G) = G \otimes G$ and $\epsilon (G)=1$ (from which it follows that $G$ is invertible and $s(G) = G^{-1}$). A quasitriangular Hopf algebra is said to be a {\em ribbon Hopf algebra} \cite{RTG,KRH} if there exists a grouplike element $G$ such that (with $u$ as in the previous paragraph) $v = G^{-1}u$ is in
the center of $A$ and $s(u) = G^{-1}uG^{-1}$. We call $G$ a special grouplike element of $A.$
\vspace{3mm}

Since $v=G^{-1}u$ is central, $vx=xv$ for all $x$ in $A.$ Therefore $G^{-1}ux = xG^{-1}u.$ We know that $s^{2}(x) = uxu^{-1}.$ Thus $s^{2}(x) =GxG^{-1}$ for all $x$ in $A.$ Similarly, $s(v) = s(G^{-1}u) = s(u)s(G^{-1})=G^{-1}uG^{-1}G =G^{-1}u=v.$ Thus, the square of the
antipode is represented as conjugation by the special grouplike element in a ribbon Hopf algebra, and the central element $v=G^{-1}u$ is invariant under the antipode.
\smallbreak

This completes the summary of Hopf algebra properties that are relevant to the last section of the paper.



\end{document}